\newtheorem{theorem}{Theorem}[section]
\newtheorem{lemma}[theorem]{Lemma}
\newtheorem{corollary}[theorem]{Corollary}
\newtheorem{de}[theorem]{Definition}
\newtheorem{example}[theorem]{Example}
\newcommand{\Int}[1]{\text{\rm Int }#1}
\title{On the discrete logarithmic Minkowski problem}
\author{K\'aroly J. B\"or\"oczky}
\address{Alfr\'ed R\'enyi Institute of Mathematics, Hungarian Academy
  of Sciences, Reltanoda u. 13-15, H-1053 Budapest, Hungary, and
Department of Mathematics, Central European University, Nador u 9, H-1051, Budapest, Hungary,boroczky.karoly.j@renyi.mta.hu}
\author{P\'al Heged\H{u}s}
\address{Department of Mathematics, Central European University, Nador u 9, H-1051, Budapest, Hungary,hegedusp@ceu.edu}
\author{Guangxian Zhu}
\address{Department of Mathematics, Polytechnic School of Engineering, New York University,  Six Metrotech, Brooklin, NY 11201, USA,guangxian.zhu@gmail.com}
\thanks{2010 \emph{Mathematics Subject Classification}: 52A40.\\
\emph{Key Words}: Polytope, cone-volume measure, Minkowski problem,
$L_{p}$ Minkowski problem, logarithmic Minkowski problem, Monge-Amp\`{e}re equation.}
\begin{document}
\maketitle

\begin{abstract}
A new sufficient condition for the existence of a solution for the logarithmic Minkowski problem is established. This new condition contains the one established by Zhu\cite{Z2} and the discrete case established by B\"or\"oczky, Lutwak, Yang, Zhang \cite{BLYZ} as two important special cases.
\end{abstract}

\section{Introduction}

The setting for this paper is $n$-dimensional Euclidean space $\mathbb{R}^{n}$. A \emph{convex body} in $\mathbb{R}^{n}$ is a compact convex set that
has non-empty interior. If $K$ is a convex body in $\mathbb{R}^{n}$, then the
\emph{surface area measure}, $S_{K}$, of $K$ is a Borel measure on the unit sphere, $S^{n-1}$, defined for a Borel $\omega\subset S^{n-1}$  (see, e.g., Schneider \cite{SCH}), by
$$
S_{K}(\omega)=\int_{x\in\nu_{K}^{-1}(\omega)}d\mathcal{H}^{n-1}(x),
$$
where $\nu_{K}:\partial'K\rightarrow S^{n-1}$ is the Gauss map of $K$, defined on $\partial'K$, the set of points of $\partial K$ that have a
unique outer unit normal, and $\mathcal{H}^{n-1}$ is ($n-1$)-dimensional Hausdorff measure.

As one of the cornerstones of the classical Brunn-Minkowski theory, the Minkowski's existence theorem can be stated as follows  (see, e.g., Schneider \cite{SCH}): If $\mu$ is not concentrated on a great subsphere of $S^{n-1}$, then $\mu$ is the surface area measure of a convex body if and only if
$$
\int_{S^{n-1}}ud\mu(u)=0.
$$
The solution is unique up to translation, and even the regularity of the solution is well investigated,  see e.g.,
Lewy \cite{LE}, Nirenberg \cite{NIR}, Cheng and Yau \cite{CY}, Pogorelov \cite{POG}, and Caffarelli \cite{LC}.

The surface area measure of a convex body has clear geometric
significance. Another important measure that is associated with a convex body and that
has clear
geometric importance is the cone-volume measure. If $K$ is a convex
body in $\mathbb{R}^{n}$ that contains the origin in its interior,
then
the \emph{cone-volume measure}, $V_{K}$, of $K$ is a Borel measure on
$S^{n-1}$ defined for each Borel $\omega\subset S^{n-1}$ by
$$
V_{K}(\omega)=\frac{1}{n}\int_{x\in\nu_{K}^{-1}(\omega)}x\cdot\nu_{K}(x)\, d\mathcal{H}^{n-1}(x).
$$
For references regarding cone-volume measure see, e.g., \cite{BH, BLYZ, BLYZ2, BLYZ3, LU1, LU2, LR, NAO, NR, PAO, ST1, ST2, ST3, Z2}.

The Minkowski's existence theorem deals with the question of prescribing the
surface area measure. The following problem is prescribing the
cone-volume measure.\\ 

\textbf{Logarithmic Minkowski problem:} What are the necessary and
sufficient conditions on a finite Borel measure $\mu$ on $S^{n-1}$ so
that $\mu$ is the cone-volume measure of a convex body in
$\mathbb{R}^{n}$?\\

In \cite{LUT},  Lutwak showed that there is an $L_{p}$ analogue of the surface area measure (known as the $L_{p}$ surface area measure). In recent years, the $L_{p}$ surface area measure appeared in, e.g., \cite{ADA, BG, CG, GM, H1, HP1, HP2, HENK, LU1, LU2, LR, LYZ1, LYZ2, LYZ3, LYZ6, LZ, NAO, NR, PAO, PW, ST3}. In \cite{LUT}, Lutwak posed the associated $L_{p}$ Minkowski problem which extends the classical Minkowski problem for $p\geq 1$.
In addition, the $L_p$ Minkowski problem for $p<1$ was publicized by a series of talks by Erwin Lutwak in the 1990's. 
 The $L_{p}$ Minkowski problem is the classical Minkowski problem when $p=1$, while the $L_{p}$ Minkowski problem is the logarithmic Minkowski problem when $p=0$. The $L_{p}$ Minkowski problem is interesting for all real $p$, and have been studied by, e.g., Lutwak \cite{LUT}, Lutwak and Oliker \cite{LO}, Chou and Wang \cite{CW},  Guan and Lin \cite{GL}, Hug, et al. \cite{HLYZ2}, B\"{o}r\"{o}czky, et al. \cite{BLYZ}. Additional references regarding the $L_{p}$ Minkowski problem and Minkowski-type problems can be found in, e.g., \cite{BLYZ, WC, CW, GG, GL, GM, H1, HL1, HLYZ2, HLYZ1, HMS, JI, KL, LWA, LUT, LO, LYZ5, MIN, ST1, ST2, Z3, Z4}. Applications of the solutions to the $L_{p}$ Minkowski problem can be found in, e.g., \cite{AN1, AN2, KSC, Z, GH, LYZ4, CLYZ, HS1, HUS, Iva13, HS2, HSX, TUO}.

A finite Borel measure $\mu$ on $S^{n-1}$ is said to satisfy the
\emph{subspace concentration condition} if, for every subspace $\xi$
of $\mathbb{R}^{n}$, such that $0<\dim\xi<n$,
\begin{equation}\tag{1.2}\label{Equation 1.2}
\mu(\xi\cap S^{n-1})\leq\frac{\dim\xi}{n}\mu(S^{n-1}),
\end{equation}
and if equality holds in (\ref{Equation 1.2}) for some subspace $\xi$, then there exists a
subspace $\xi'$, that is complementary to $\xi$ in $\mathbb{R}^{n}$,
so that also
$$
\mu(\xi'\cap S^{n-1})=\frac{\dim\xi'}{n}\mu(S^{n-1}).
$$

The measure $\mu$ on $S^{n-1}$ is said to satisfy the \emph{strict subspace concentration inequality} if the inequality in (\ref{Equation 1.2}) is strict for each subspace $\xi\subset\mathbb{R}^{n}$, such that $0<\dim\xi<n$.

Very recently, B\"{o}r\"{o}czky and Henk \cite{BH}  proved that if the centroid of a convex body is the origin, then the cone-volume measure of this convex body satisfies the subspace concentration condition. For more references on the progress of the subspace concentration condition, see, e.g., Henk et al. \cite{HSW}, He et al. \cite{HLK}, Xiong \cite{XIO}, B\"{o}r\"{o}czky et al. \cite{BLYZ3}, and Henk and Linke \cite{HENK}.

In \cite{BLYZ}, B\"{o}r\"{o}czky, et al. established the
following necessary and sufficient conditions for the existence of
solutions to the
even logarithmic Minkowski problem.

\begin{theorem}[B\"or\"oczky,Lutwak,Yang,Zhang]
\label{theoA}
A non-zero finite even Borel measure on $S^{n-1}$ is the cone-volume measure of an
origin-symmetric convex body in $\mathbb{R}^{n}$ if and only if it
satisfies the subspace concentration condition.
\end{theorem}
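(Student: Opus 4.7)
The plan is to establish the two directions of the equivalence separately. Necessity---that the cone-volume measure of every origin-symmetric convex body satisfies the subspace concentration condition---is a geometric volume inequality, while sufficiency will be proved by a variational argument in which the subspace concentration condition is exactly what prevents extremal sequences from degenerating.

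For necessity, fix an origin-symmetric convex body $K$ and a subspace $\xi$ with $0 < \dim \xi = k < n$. First reduce to polytopes by choosing symmetric polytopes $P_j \to K$ in the Hausdorff metric; the cone-volume measure is weakly continuous under such convergence when the limit has nonempty interior. For a symmetric polytope $P$ with facets $F_i$, outer unit normals $u_i$, and support values $h_i = h_P(u_i)$, one has $V_P = \sum_i \tfrac{h_i|F_i|}{n}\,\delta_{u_i}$. A facet $F_i$ with $u_i \in \xi$ lies in a hyperplane containing a translate of $\xi^\perp$, so $F_i$ is a cylinder over its projection $\pi_\xi(F_i) \subset \partial(P|\xi)$. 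Applying Fubini along the $\xi^\perp$-fibers, together with the Cauchy-type formula $V(P|\xi) = \tfrac{1}{k}\int_{\partial(P|\xi)} h_{P|\xi}\,d\mathcal{H}^{k-1}$ inside $\xi$, one rewrites $\sum_{u_i \in \xi} \tfrac{h_i|F_i|}{n}$ as a weighted surface integral on $\partial(P|\xi)$ and compares it with
$$V(P) = \int_{P|\xi} \mathrm{vol}_{n-k}\bigl(P \cap (u+\xi^\perp)\bigr)\,du.$$
Origin-symmetry enters because each fiber $P \cap (u+\xi^\perp)$ is centrally symmetric about $u$, and this symmetry produces the sharp factor $k/n$ in the bound $V_P(\xi \cap S^{n-1}) \leq \tfrac{k}{n}V(P)$. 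In the equality case the fibers must all be translates of a single $(n-k)$-dimensional body, forcing a product decomposition $P = (P \cap \xi) \oplus (P \cap \xi^\perp)$, and $\xi^\perp$ supplies the required complementary subspace.

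For sufficiency, begin with the case of an even discrete measure $\mu = \sum_i c_i\,\delta_{u_i}$. Consider the class of origin-symmetric polytopes whose facet normals lie among $\pm u_i$ and whose volume equals $|\mu|/n$, and on this class maximize
$$\Phi(P) = \int_{S^{n-1}} \log h_P(u)\,d\mu(u).$$
A direct first-variation computation, together with $V(P) = \tfrac{1}{n}\sum_j h_P(u_j)|F_j(P)|$, shows that at any critical point the Lagrange multiplier relation reduces to $c_i = \lambda\,\tfrac{h_P(u_i)|F_i(P)|}{n}$, i.e., $V_P = \lambda^{-1}\mu$, and rescaling $P$ by a factor $\lambda^{1/n}$ produces the desired polytope. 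The general even case then follows by approximating $\mu$ weakly by even discrete measures $\mu_j$ that still satisfy the subspace concentration condition, solving each discretely, and extracting a Hausdorff-convergent subsequence by the Blaschke selection theorem.

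The main obstacle is proving that the supremum of $\Phi$ is actually attained, i.e., that a maximizing sequence $P_j$ cannot degenerate. If $P_j$ flattens toward a proper subspace $\xi$ of dimension $k$, then $h_{P_j}(u) \to 0$ for $u \in \xi^\perp$ while the fixed-volume constraint forces $h_{P_j}$ to blow up in directions close to $\xi$. Partitioning $S^{n-1}$ according to proximity to $\xi$ and tracking the logarithmic growth on each piece produces an asymptotic estimate of the form
$$\Phi(P_j) \sim \Bigl(\mu(\xi \cap S^{n-1}) - \tfrac{k}{n}|\mu|\Bigr)\,\log\max_i h_{P_j}(u_i) + O(1),$$
so $\Phi(P_j)$ can stay large only when $\mu(\xi \cap S^{n-1}) \geq \tfrac{k}{n}|\mu|$, and strict inequality is precisely excluded by the subspace concentration condition. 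In the borderline case of equality for some $\xi$, the complementary-subspace clause allows the extremal problem to split as a product over $\xi$ and $\xi'$, reducing it to two instances of lower dimension that are handled inductively. Verifying that the cone-volume data induced on these factors still satisfy subspace concentration in their respective dimensions is the most delicate bookkeeping step and forms the technical core of \cite{BLYZ}.
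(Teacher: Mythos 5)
The paper does not itself prove Theorem~\ref{theoA}; it is quoted from \cite{BLYZ}, and the necessity direction is attributed to B\"or\"oczky--Henk \cite{BH}. What the paper does prove, in Sections~3--5, is the more general Theorem~\ref{sccgen}, and the machinery there parallels the BLYZ sufficiency argument closely enough to diagnose your proposal.

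Your overall architecture (necessity by Fubini along $\xi^\perp$-fibres; sufficiency via a variational problem for $\Phi(P)=\int_{S^{n-1}}\log h_P\,d\mu$ under a volume constraint, with a Lagrange-multiplier computation identifying critical points with solutions, and with the subspace concentration condition ruling out degeneration) is the right high-level plan, and it matches the paper's. However, the sufficiency part has a genuine sign error that undermines the key compactness step. The correct variational problem is a \emph{minimization} of $\Phi$, not a maximization: the paper's Lemma~\ref{Lemma 4.2} shows that for a degenerating sequence $P_m$ (with $V(P_m)$ fixed, $\xi(P_m)=0$, $d(P_m)\to\infty$) the functional $\int\log h(P_m,\cdot)\,d\mu$ tends to $+\infty$ under the strict concentration inequality. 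If you were maximizing, such a sequence would be a maximizing sequence and the supremum would not be attained. Your asymptotic
$\Phi(P_j)\sim\bigl(\mu(\xi\cap S^{n-1})-\tfrac{k}{n}|\mu|\bigr)\log\max_i h_{P_j}(u_i)$
is also computed against the wrong subspace: the dichotomy ``big support values lie on $\xi$, small ones on $\xi^\perp$'' is false in general, since normals with large $h_{P_j}$ need not lie in $\xi$ (only the small-support normals are forced into a subspace). The correct version must be phrased in terms of the \emph{small-support} subspace $\zeta$ spanned by the normals with small $h_{P_j}$-values, and then the strict inequality $\mu(\zeta\cap S^{n-1})<\tfrac{\dim\zeta}{n}|\mu|$ combined with the volume constraint gives $\Phi(P_m)\to+\infty$, which is exactly what the paper's estimates (4.3), (4.4) encode. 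A concrete $n=2$ example with mass at $\pm e_1,\pm e_2,\pm(e_1+e_2)/\sqrt2$ and a slab degenerating along $e_1$ shows your formula predicts $\Phi\to-\infty$ while a direct calculation gives $\Phi\to+\infty$.

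Two minor remarks. In the necessity sketch, the claim that a facet $F_i$ with $u_i\in\xi$ ``is a cylinder over its projection'' is not literally true; what is true, and what your Fubini step actually needs, is that the fibre of $F_i$ over a boundary point $y\in F_i|\xi$ equals the full slice $P\cap(y+\xi^\perp)$, from which $|F_i|=\int_{F_i|\xi}\mathrm{vol}_{n-k}\bigl(P\cap(y+\xi^\perp)\bigr)\,d\mathcal{H}^{k-1}$ follows. Finally, for the borderline/equality case, the splitting into complementary subspaces and induction on dimension is exactly what the paper does in Section~5 (Lemmas~\ref{Lemma 5.1}--\ref{Lemma 5.3} and Theorem~\ref{main}), so that part of your outline is on target.
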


The convex hull of a finite set is called a polytope provided that it has positive $n$-dimensional volume.
The convex hull of a subset of these points is called a \emph{facet} of the polytope if it lies entirely on the boundary of the polytope and has
positive ($n-1$)-dimensional volume. If a polytope $P$ contains the origin in its interior and has $N$ facets whose outer unit normals are
$u_{1},...,u_{N}$, and such that if the facet with outer unit normal $u_{k}$ has $(n-1)$-measure $a_{k}$ and distance from the origin $h_{k}$ for
all $k\in\{1,...,N\}$, then
$$
V_P=\frac1n\sum_{k=1}^{N}h_{k}a_{k}\delta_{u_{k}}.
$$
where $\delta_{u_{k}}$ denotes the delta measure that is concentrated at the point $u_{k}$.

A finite subset $U$ (with no less than $n$ elements) of $S^{n-1}$ is said to be \emph{in general position} if any $k$ elements of $U$, $1\le k\le n$,  are linearly independent.

For a long time, people believed that the data for a cone-volume measure can not be arbitrary. However, Zhu \cite{Z2} proved that any discrete measure on $S^{n-1}$ whose support is in general position is a cone-volume measure.

\begin{theorem}[Zhu]
\label{theoB}
A discrete measure, $\mu$, on the unit sphere $S^{n-1}$ is the cone-volume measure of a polytope whose outer unit normals are in general position if and only if the support of $\mu$ is in general position and not concentrated on a closed hemisphere of $S^{n-1}$.
\end{theorem}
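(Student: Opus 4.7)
The necessity is immediate from definitions: if $\mu = V_P$ where $P$ is a polytope with origin in its interior and outer unit normals $U = \{u_1,\ldots,u_N\}$ in general position, then $\mathrm{supp}(\mu) = U$ is in general position by hypothesis, and $U$ cannot lie in any closed hemisphere, for otherwise $P$ would be unbounded or the origin would fail to be interior.

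For sufficiency, write $\mu = \sum_{k=1}^{N} c_k \delta_{u_k}$ with $c_k > 0$, $U = \{u_1,\ldots,u_N\}$ in general position, and $U$ not contained in any closed hemisphere. The plan is a variational one. For $h = (h_1,\ldots,h_N) \in \mathbb{R}_{>0}^{N}$ define $P[h] = \{x \in \mathbb{R}^n : x\cdot u_k \le h_k \text{ for all } k\}$; the hemisphere hypothesis ensures $P[h]$ is a bounded polytope containing the origin in its interior. I would consider the minimization
$$
\alpha := \inf\Bigl\{\,\sum_{k=1}^{N} c_k \log h_k \;:\; h \in \mathbb{R}_{>0}^{N},\; V(P[h]) = |\mu|\,\Bigr\},
$$
where $|\mu| = \sum_{k} c_k$. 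Suppose the infimum is attained at some $h^*$ in the interior of the admissible set with each $u_k$ a genuine outer unit normal of a facet of $P[h^*]$ of $(n-1)$-dimensional area $a_k^* > 0$. Using $\partial V(P[h])/\partial h_k = a_k(h)$, the Lagrange multiplier condition reads $c_k/h_k^* = \lambda\, a_k^*$. Multiplying by $h_k^*$, summing over $k$, and applying the identity $|\mu| = V(P[h^*]) = \tfrac{1}{n}\sum_k h_k^* a_k^*$ forces $\lambda = 1/n$, so $c_k = \tfrac{1}{n} h_k^* a_k^*$ and $\mu = V_{P[h^*]}$, as desired.

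The heart of the proof is therefore to secure the existence and non-degeneracy of such an $h^*$. Given a minimizing sequence $h^{(j)}$, one must rule out two degeneration modes: (i) some heights becoming unbounded, and (ii) some heights tending to zero, collapsing a facet and sending the logarithmic sum toward $-\infty$. Mode (i) is controlled by the hemisphere hypothesis, which under the volume constraint keeps $P[h^{(j)}]$ globally bounded in terms of any sufficient subcollection of heights, ruling out runaway growth. Mode (ii) is the crux, and here the general position hypothesis is essential: if $I = \{k : h_k^{(j)} \to 0\}$ is nonempty, select up to $n$ indices from $I$; by general position the corresponding normals are linearly independent, so $P[h^{(j)}]$ is confined to a thin slab in their linear span. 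A volume estimate bounds $V(P[h^{(j)}])$ from above by the product of the collapsing heights times a polynomial in the remaining heights, and coupled with $V(P[h^{(j)}]) = |\mu|$ this pins down the growth rate of the non-collapsing heights. Balancing that rate against the divergent pieces of $\sum_k c_k \log h_k^{(j)}$ shows that a genuine minimizing sequence cannot actually degenerate---either yielding a contradiction with the volume constraint or exhibiting a competitor with strictly smaller objective. Once the interior minimizer $h^*$ is secured, a final non-redundancy check, once more using general position, confirms that every prescribed $u_k$ is truly a facet normal of $P[h^*]$, after which the Euler-Lagrange computation above delivers $\mu = V_{P[h^*]}$ exactly.
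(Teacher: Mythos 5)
The paper itself does not prove Theorem~\ref{theoB}; it quotes it from Zhu \cite{Z2} and observes that it follows from Theorem~\ref{sccgen}, because a support in general position admits no essential subspaces, so the essential subspace concentration condition is vacuous. The relevant comparison is therefore with the extremal-problem machinery of Sections~3--4, and there your variational formulation is ill-posed. Your infimum $\alpha$ is $-\infty$ and is never attained: take any admissible $h$, write $Q=P[h]$, and for $\xi\in\mathrm{int}\,Q$ put $h_k(\xi)=h(Q,u_k)-\xi\cdot u_k>0$; then $P[h(\xi)]=Q-\xi$ is again admissible with the same volume, and as $\xi$ approaches a boundary point of $Q$ lying on a facet with normal $u_{k_0}$, we have $h_{k_0}(\xi)\to0$ while the other $h_k(\xi)$ stay bounded below, so $\sum_k c_k\log h_k(\xi)\to-\infty$. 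The polytope is merely translating, not thinning, which also defeats your analysis of degeneration mode~(ii): linearly independent normals with vanishing heights confine $P[h]$ to a \emph{translate of a cone}, not a thin slab, and the volume constraint is fully consistent with the objective diverging to $-\infty$. Consequently the hypothesis ``the infimum is attained at an interior $h^*$ with all facets present'' has nothing supporting it.

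The remedy, and the route Zhu and the paper actually take, is a min-max problem (see (\ref{Equation 3.2}) and Lemma~\ref{Lemma 4.4}). For each shape $Q$ one first \emph{maximizes} $\Phi_Q(\xi)=\sum_k\gamma_k\log\left(h(Q,u_k)-\xi\cdot u_k\right)$ over $\xi\in\mathrm{int}\,Q$; strict concavity gives a unique interior maximizer $\xi(Q)$ (Lemma~\ref{Lemma 3.1}) satisfying $\sum_k\gamma_k u_k/\left(h(Q,u_k)-\xi(Q)\cdot u_k\right)=0$ (Lemma~\ref{Lemma 3.3}). Only then does one minimize $\Phi_Q(\xi(Q))$ over $Q\in\mathcal{P}(u_1,\ldots,u_N)$ with $V(Q)=|\mu|$. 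The inner maximization neutralizes precisely the translation direction that sends your objective to $-\infty$, and the first-order identity it yields is what lets the Euler--Lagrange computation in Lemma~\ref{Lemma 3.4} close (the term $\xi'(0)\cdot\sum_k\gamma_k u_k/h(P,u_k)$ drops out). Coercivity against shape degeneration is Lemma~\ref{Lemma 4.2} (this is where general position enters, via the absence of essential subspaces), and facet non-redundancy at the optimizer is established by the cutting argument of Lemma~\ref{Lemma 4.3}, not by a Lagrange-multiplier count. Your final Euler--Lagrange calculation is aimed at the right identity, but it is derived from a minimization problem that has no minimizer.
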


A linear subspace $\xi$ ($1\leq\dim\xi\leq n-1$) of  $\mathbb{R}^{n}$ is said to be essential with respect to a Borel measure $\mu$ on $S^{n-1}$ if $\xi\cap{\rm supp}(\mu)$ is not concentrated on any closed hemisphere of $\xi\cap S^{n-1}$. 

\begin{de} 
\label{essentialdef}
A finite Borel measure $\mu$ on $S^{n-1}$ is said to satisfy the
essential subspace concentration condition if, for every essential subspace $\xi$ (with respect to $\mu$)
of $\mathbb{R}^{n}$, such that $0<\dim\xi<n$,
\begin{equation}\tag{1.3}\label{Equation 1.3}
\mu(\xi\cap S^{n-1})\leq\frac{\dim\xi}{n}\mu(S^{n-1}),
\end{equation}
and if equality holds in (\ref{Equation 1.3}) for some essential subspace $\xi$ (with respect to $\mu$), then there exists a
subspace $\xi'$, that is complementary to $\xi$ in $\mathbb{R}^{n}$,
so that
\begin{equation}\tag{1.4}
\mu(\xi'\cap S^{n-1})=\frac{\dim\xi'}{n}\mu(S^{n-1}).
\end{equation}
\end{de}

\begin{de} 
The measure $\mu$ on $S^{n-1}$ is said to satisfy the strict essential subspace concentration inequality if the inequality in (\ref{Equation 1.3}) is strict for each essential subspace $\xi$ (with respect to $\mu$) of $\mathbb{R}^{n}$, such that $0<\dim\xi<n$.
\end{de}
We would like to note that if $\mu$ is a Borel measure on the unit sphere that is not concentrated on a closed hemisphere and satisfies the essential subspace concentration condition, and $\xi$ is an essential subspace (with respect to $\mu$) that reaches the equality in (1.3), then by Lemma~\ref{Lemma 5.2}, $\xi'$ (in (1.4)) is an essential subspace with respect to $\mu$.

It is the aim of this paper to establish the following.

\begin{theorem}
\label{sccgen}
If $\mu$ is a discrete measure on $S^{n-1}$ that is not concentrated on any closed hemisphere and satisfies the essential subspace concentration condition, then $\mu$ is the cone-volume measure of a polytope in $\mathbb{R}^{n}$ containing the origin in its interior.
\end{theorem}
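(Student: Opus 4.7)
\bigskip

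\noindent\textbf{Proof plan for Theorem \ref{sccgen}.}

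My plan is to use a variational approach in the spirit of \cite{BLYZ} and \cite{Z2}. Write $\mu = \sum_{k=1}^{N} c_{k}\delta_{u_{k}}$ with $c_{k}>0$. For a vector $h=(h_{1},\dots,h_{N})$ with $h_{k}\ge 0$, put
$$
P(h)=\{x\in\R^{n}:\, x\cdot u_{k}\le h_{k}\text{ for all }k=1,\dots,N\}.
$$
Let $\Omega\subset\R^{N}$ be the (nonempty, open) set of $h$ with $h_{k}>0$ for all $k$ and such that each $u_{k}$ is the outer normal of an $(n-1)$\nobreakdash-dimensional facet of $P(h)$. On $\Omega$, consider the functional
$$
\Phi(h)=\sum_{k=1}^{N} c_{k}\log h_{k},
$$
and seek to minimize the volume $V(P(h))$ subject to the constraint $\Phi(h)=1$. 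If $h^{\ast}\in\Omega$ is a minimizer and $\lambda>0$ the Lagrange multiplier, then $\partial V(P(h))/\partial h_{k}=a_{k}$, where $a_{k}$ is the $(n-1)$-measure of the facet with normal $u_{k}$, so the Euler--Lagrange equations read $h_{k}^{\ast}a_{k}=\lambda c_{k}$ for every $k$. This is precisely the statement that the cone-volume measure of $P(h^{\ast})$ is proportional to $\mu$; a dilation of $P(h^{\ast})$ then yields a polytope whose cone-volume measure equals $\mu$.

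The theorem therefore reduces to producing a minimizer $h^{\ast}\in\Omega$. The straightforward steps are: (i) show that $\inf V$ is strictly positive (using that $\mu$ is not concentrated in any closed hemisphere, which is what implies $P(h)$ is bounded for $h\in\Omega$); (ii) normalize a minimizing sequence $h^{(j)}$ by a diagonal linear transformation so that, up to subsequence, $P(h^{(j)})$ converges in the Hausdorff metric to a convex body $P^{\ast}$; and (iii) pass from convergence of $P(h^{(j)})$ to a lower semi-continuity argument for $\Phi$. Items (i)--(ii) are fairly routine since $V$ is continuous and the constraint $\Phi=1$ keeps a suitable combination of $h_{k}$'s away from $0$ and $\infty$.

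The genuine difficulty, where the essential subspace concentration condition must do its work, is ruling out that the limiting $P^{\ast}$ degenerates or that some $h_{k}^{(j)}\to 0$ (equivalently, the origin drifts onto $\partial P^{\ast}$). The plan here is the following. Suppose along a minimizing sequence that, after a linear normalization, certain coordinates collapse; let $\xi$ be the linear span of those $u_{k}$ with $h_{k}^{(j)}\to 0$, together with the directions along which $P(h^{(j)})$ stays bounded. Then $\xi\cap\mathrm{supp}(\mu)$ cannot lie in a closed hemisphere of $\xi\cap S^{n-1}$ (otherwise the polytopes are unbounded), so $\xi$ is essential with respect to $\mu$. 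By comparing the volume $V(P(h^{(j)}))$ with a product of contributions along $\xi$ and a complementary subspace (using a Brunn--Minkowski-type splitting, cf.\ the estimates in \cite{BH} and \cite{BLYZ}), one derives the inequality $\mu(\xi\cap S^{n-1})>(\dim\xi/n)\mu(S^{n-1})$, contradicting \eqref{Equation 1.3}. The equality case is handled by the extra clause of Definition \ref{essentialdef}: equality forces a complementary essential subspace $\xi'$, which splits the optimization problem into two lower-dimensional subproblems, each solvable by induction on the dimension $n$. The base case $n=1$ is trivial, and the inductive step is closed by combining the two lower-dimensional solutions into a single polytope whose cone-volume measure is $\mu$.

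The main obstacle I anticipate is precisely this last inductive/splitting step: matching a candidate essential subspace attaining equality with a genuinely complementary essential subspace, and then showing that the two independently constructed polytopes can be glued (after scaling) into a single polytope whose facets are in the prescribed directions $u_{1},\dots,u_{N}$ with the prescribed cone-volume weights. This is where Lemma \ref{Lemma 5.2} (which the authors advertise in the remark before the theorem) should play a central role in guaranteeing that the complementary subspace $\xi'$ is itself essential, so that the induction hypothesis applies on both sides of the splitting.
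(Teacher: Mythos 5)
Your overall strategy (variational problem, compactness forced by the concentration condition, induction on dimension via the splitting lemma to handle the equality case) matches the paper's high-level outline, and you correctly anticipate that Lemma \ref{Lemma 5.2} drives the induction. However, the variational setup you propose has a genuine flaw, and the key compactness ingredient is missing.

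\textbf{The optimization is set up in the wrong way.} The paper does \emph{not} minimize $V(P(h))$ subject to $\sum_k c_k \log h_k = 1$; it solves the min-max problem (\ref{Equation 3.2}): minimize $\max_{\xi\in\Int Q}\Phi_Q(\xi)$ over polytopes $Q$ with $V(Q)=|\mu|$. The inner maximization over the reference point $\xi$ is not a cosmetic device. In your formulation the origin is fixed, and for a given polytope shape, one can choose the scale to meet $\Phi(h)=1$ and then slide the origin towards the boundary to make $V$ arbitrarily large, or center at the entropy point to make $V$ as small as possible for that shape. After this normalization, minimizing $V$ over shapes amounts to \emph{maximizing} $\max_\xi\Phi_Q(\xi)$ over $Q$ with $V(Q)$ fixed, which is the opposite of the paper's problem. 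Lemma \ref{Lemma 4.2} shows that under the strict essential subspace concentration inequality, $\max_\xi\Phi_{P_m}(\xi)\to+\infty$ along any degenerating sequence $P_m$ (with $V(P_m)=1$), so that quantity is unbounded above; consequently the infimum of $V$ in your problem is $0$ and is not attained. The Euler--Lagrange equations alone do not save you: they identify the cone-volume stationarity condition, but that critical point is a saddle for your objective, not a minimum. Your step (i), asserting $\inf V>0$ merely from $\mu$ not being concentrated on a closed hemisphere, is therefore incorrect; that hypothesis only gives boundedness of individual $P(h)$, not coercivity of the functional.

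\textbf{The centering condition $\xi(P)=o$ is the engine of the compactness argument, and you have no analogue of it.} In the paper, the first-order condition $\sum_k \gamma_k u_k/h(P,u_k)=0$ from Lemma \ref{Lemma 3.3} (valid because $P$ is translated so the entropy-maximizing point is at $o$) is used in Lemma \ref{prep44} to prove that each ``collapse'' subspace $X_j = \textmd{pos}\{u_1,\ldots,u_{\alpha_{j+1}-1}\}$ is actually a \emph{linear} subspace and that $\{u_1,\ldots,u_{\alpha_{j+1}-1}\}$ is not concentrated on a closed hemisphere of $X_j\cap S^{n-1}$; hence $X_j$ is essential with respect to $\mu$. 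Your substitute argument --- ``$\xi\cap\mathrm{supp}(\mu)$ cannot lie in a closed hemisphere of $\xi\cap S^{n-1}$ (otherwise the polytopes are unbounded), so $\xi$ is essential'' --- is not correct: directions in $\xi$ can be bounded by facets whose normals lie outside $\xi$, so concentration of $\mathrm{supp}(\mu)\cap\xi$ on a closed hemisphere of $\xi\cap S^{n-1}$ does not by itself make the polytopes unbounded. Without the balancing identity from Lemma \ref{Lemma 3.3}, there is no reason for the degenerate subspaces you identify to be essential, and so the strict essential subspace concentration inequality cannot be invoked. Additionally, the normalization ``by a diagonal linear transformation'' does not preserve the fixed set of normal directions $u_1,\dots,u_N$ and would need to be replaced by translation to the entropy point (which is exactly what the paper's $\xi(P)$ does).

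\textbf{What the paper actually does.} It first treats the strict-inequality case via a min-max: Lemmas \ref{Lemma 3.1}--\ref{Lemma 3.3} set up $\xi(P)$ and its stationarity identity; Lemma \ref{Lemma 3.4} shows that a minimizer of (\ref{Equation 3.2}) lying in $\mathcal{P}_N$ with $\xi(P)=o$ solves the cone-volume equation; Lemmas \ref{prep41}--\ref{prep44} and \ref{Lemma 4.2} give coercivity using the strict essential subspace concentration inequality together with the centering identity; and Lemma \ref{Lemma 4.4} uses Lemma \ref{Lemma 4.3} to rule out lost facets. The equality case is then treated by induction with Lemmas \ref{Lemma 5.1}--\ref{Lemma 5.3}, exactly as you anticipate in your final paragraph. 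To repair your proposal you would need to replace your straight minimization with the min-max formulation (or an equivalent maximization of $V$ over entropy-normalized shapes) and to use the centering identity --- not the hand-wavy ``unboundedness'' argument --- to prove essentiality of the collapse subspaces.
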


If $\mu$ is a non-trivial even Borel measure on $S^{n-1}$, and $\xi$ is a $k$-dimensional linear subspace  of $\mathbb{R}^{n}$ spanned by some vectors $v_1,\ldots,v_k\in{\rm supp}(\mu)$ for $1\leq k\leq n-1$, then $-v_1,\ldots,-v_k\in{\rm supp}(\mu)$, as well, and hence $\xi$
 is an essential subspace. In particular,  for even discrete measures, Theorem~\ref{sccgen} is equivalent to the sufficient condition of Theorem~\ref{theoA}. However, there are non-even discrete measures that satisfy the 
 essential subspace concentration condition, but not the subspace concentration condition. For example, if a $k$-dimensional subspace $\xi$, $1\leq k\leq n-1$,  intersects the support of the measure in $k+1$ unit vectors $u_0,\ldots,u_k$ such that $u_1,\ldots,u_k$ are independent, and $u_0=\alpha_1u_1+\ldots+\alpha_ku_k$ for $\alpha_1,\ldots,\alpha_k>0$, then there is no condition on the restriction of the measure to $\xi\cap S^{n-1}$. 
Therefore, for discrete measures, Theorem~\ref{sccgen} is a generalization of the sufficient condition of Theorem~\ref{theoA}.

We claim that if the support of a discrete measure $\mu$ is in general position, then the set of essential subspaces (with respect to $\mu$) is empty. Otherwise, there exists a subspace $\xi$ with $1\leq\dim\xi\leq n-1$ such that supp$(\mu)\cap\xi$ is not concentrated on a closed hemisphere of $S^{n-1}\cap\xi$. Then we can choose $\dim\xi+1$ $(\leq n)$ vectors from supp$(\mu)\cap\xi$ that are linearly dependent. But this contradicts the fact that supp$(\mu)$ is in general position. From our declaration, we have, Theorem~\ref{sccgen} contains Theorem~\ref{theoB} as an important special case.

In $\mathbb{R}^{2}$, Theorem~\ref{sccgen} leads to the main result of Stancu (\cite{ST1}, pp. 162), where she applied a different method called the crystalline deformation.

New inequalities for cone-volume measures are established in section~\ref{secnec}.

\section{Preliminaries}

In this section, we collect some basic notations and facts about convex bodies. For general references regarding convex bodies see, e.g., \cite{GAR1,PM, GRU, SCH, THO}.

The vectors of this paper are column vectors. For $x, y\in\mathbb{R}^{n}$, we will write $x\cdot y$ for the standard
inner product of $x$ and $y$, and write $|x|$ for the Euclidean norm of $x$. We write $S^{n-1}=\{x\in\mathbb{R}^{n}: |x|=1\}$ for
the boundary of the Euclidean unit ball $B^{n}$ in $\mathbb{R}^{n}$, and write $\kappa_{n}$ for the volume of the unit ball.
 Let  $V_k(M)$ denote the $k$-dimensional Hausdorff measure of an at most $k$-dimensional convex set $M$. In addition, if $k=n-1$, then we also use the notation $|M|$.

Suppose $X_{1}, X_{2}$ are subspaces of $\mathbb{R}^{n}$, we write $X_{1}\perp X_{2}$ if $x_{1}\cdot x_{2}=0$ for all $x\in X_{1}$ and $x_{2}\in X_{2}$. Suppose $X$ is a subspace of $\mathbb{R}^{n}$ and $S$ is a subset of $\mathbb{R}^{n}$, we write $S|_{X}$ for the orthogonal projection of $S$ on $X$.

Suppose $C$ is a subset of $\mathbb{R}^{n}$, the positive hull, $\textmd{pos}(C)$, of $C$ is the set of all positive combinations of any finitely many elements of $C$. Let $\textmd{lin}(C)$ be the smallest linear subspace of $\mathbb{R}^{n}$ containing $C$. The diameter of $C$ is defined by
$$
d(C)=\sup\{|x-y|: x,y\in C\}.
$$

For $K_{1}, K_{2}\subset\mathbb{R}^{n}$ and $c_{1}, c_{2}\geq0$, the Minkowski combination, $c_{1}K_{1}+c_{2}K_{2}$, is defined by
$$
c_{1}K_{1}+c_{2}K_{2}=\{c_{1}x_{1}+c_{2}x_{2}: x_{1}\in K_{1}, x_{2}\in K_{2}\}.
$$
The \emph{support function}
$h_{K}: \mathbb{R}^{n}\rightarrow\mathbb{R}$ of a compact convex set
$K$ is defined, for $x\in\mathbb{R}^{n}$, by
$$
h(K,x)=\max\{x\cdot y: y\in K\}.
$$
Obviously, for $c\geq0$ and $x\in\mathbb{R}^{n}$, we have
$$
h(cK,x)=h(K,cx)=ch(K,x).
$$

The \emph{convex hull} of two convex sets $K, L$ in
$\mathbb{R}^{n}$ is defined by
$$
[K,L]=\{z: z=\lambda x+(1-\lambda)y, 0\leq\lambda\leq1\textmd{ and }x,y\in K\cup L\}.
$$
The \emph{Hausdorff distance} of two compact sets $K, L$ in
$\mathbb{R}^{n}$ is defined by
$$
\delta(K,L)=\inf\{t\geq0: K\subset L+tB^{n}, L\subset K+tB^{n}\}.
$$
It is known that the Hausdorff distance between two convex bodies, $K$ and $L$, is
$$
\delta(K,L)=\max_{u\in S^{n-1}}|h(K,u)-h(L,u)|.
$$
We always consider the space of convex bodies as metric space equipped with the Hausdorff distance. It is known that if a sequence $\{K_m\}$ of convex bodies tends to a convex body $K$ in $\mathbb{R}^{n}$ containing the origin in its interior, then $S_{K_m}$ tends weakly to $S_K$, and hence $V_{K_m}$ tends weakly to $V_K$ (see Schneider \cite{SCH}).

For a convex body $K$ in $\mathbb{R}^{n}$, and $u\in S^{n-1}$,
the \emph{support hyperplane} $H(K,u)$ in direction $u$ is defined by
$$
H(K,u)=\{x\in\mathbb{R}^{n}:x\cdot u=h(K,u)\},
$$
the \emph{face} $F(K,u)$ in direction $u$ is defined by
$$
F(K,u)=K\cap H(K,u).
$$

Let $\mathcal{P}$ be the set of all polytopes in $\mathbb{R}^{n}$. If the unit vectors $u_{1},...,u_{N}$ are not concentrated on a closed hemisphere, let $\mathcal{P}(u_{1},...,u_{N})$ be the set of all polytopes $P\in\mathcal{P}$ such that the set of outer unit normals of the facets of $P$ is a subset of $\{u_{1},...,u_{N}\}$, and let $\mathcal{P}_{N}(u_{1},...,u_{N})$ be the the set of all polytopes $P\in\mathcal{P}$ such that the set of outer unit normals of the facets of $P$ is $\{u_{1},...,u_{N}\}$.

\section{An extremal problem related to the logarithmic Minkowski problem}

Let us suppose $\gamma_{1},...,\gamma_{N}\in(0,\infty)$, and the unit vectors $u_{1},...,u_{N}$ are not concentrated on a closed hemisphere. Let
\begin{equation}\tag{3.0}\label{Equation 3.0}
\mu=\sum_{i=1}^{N}\gamma_{i}\delta_{u_{i}},
\end{equation}
and for $P\in\mathcal{P}(u_{1},...,u_{N})$ define $\Phi_{P}:$
$\Int(P)\rightarrow\mathbb{R}$ by
\begin{equation}\tag{3.1}\label{Equation 3.1}
\begin{split}
\Phi_{P}(\xi)&=\int_{S^{n-1}}\log\left(h(P,u)-\xi\cdot u\right)d\mu(u)\\
&=\sum_{k=1}^{N}\gamma_{k}\log\big(h(P,u_{k})-\xi\cdot u_{k}\big),
\end{split}
\end{equation}
where $\Int(P)$ is the interior of $P$.

In this section, we study the following extremal problem:
\begin{equation}\tag{3.2}\label{Equation 3.2}
\inf\left\{\max_{\xi\in\Int (Q)}\Phi_{Q}(\xi):
Q\in\mathcal{P}(u_{1},...,u_{N})\textmd{ and
}V(Q)=|\mu|\right\},
\end{equation}
where $|\mu|=\sum_{k=1}^{N}\gamma_{k}$.

We will prove that the solution of problem (\ref{Equation 3.2}) solves the corresponding logarithmic Minkowski problem.

For the case where $u_{1},...,u_{N}$ are in general position and $Q\in\mathcal{P}_{N}(u_{1},...,u_{N})$, problem (\ref{Equation 3.2}) was studied in \cite{Z2}. The results and proofs in this section are similar to \cite{Z2}. However, for convenience of the readers, we give detailed proofs for these results.

\begin{lemma}\label{Lemma 3.1}
Suppose $\mu=\sum_{k=1}^{N}\gamma_{k}\delta_{u_{k}}$ is a discrete measure on $S^{n-1}$ that is not concentrated on a closed hemisphere, and $P\in\mathcal{P}(u_{1},...,u_{N})$, then there exists a unique
point $\xi(P)\in\Int (P)$ such that
$$
\Phi_{P}(\xi(P))=\max_{\xi\in\Int(P)}\Phi_{P}(\xi).
$$
\end{lemma}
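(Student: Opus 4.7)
The plan is to show that $\Phi_P$ is strictly concave on $\Int(P)$ and that $\Phi_P(\xi)\to -\infty$ as $\xi$ approaches $\partial P$; together these yield existence and uniqueness of a maximizer by standard convex analysis.

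First I would check that $\Phi_P$ is well-defined on $\Int(P)$: for any $\xi\in\Int(P)$ and any $u\in S^{n-1}$, the relation $\xi\cdot u=h(P,u)$ would force $\xi\in H(P,u)\subset\partial P$, so in fact $h(P,u_k)-\xi\cdot u_k>0$ for every $k$, making each logarithm finite and $\Phi_P$ smooth. Each summand $\gamma_k\log(h(P,u_k)-\xi\cdot u_k)$ is the composition of the concave function $\log$ with an affine function of $\xi$, multiplied by $\gamma_k>0$; hence $\Phi_P$ is concave on $\Int(P)$.

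For strict concavity, I would compute the second directional derivative. Along any direction $v\in\R^n$,
\[
(v\cdot\nabla)^2\Phi_P(\xi)\;=\;-\sum_{k=1}^N \gamma_k\,\frac{(v\cdot u_k)^2}{\bigl(h(P,u_k)-\xi\cdot u_k\bigr)^{2}},
\]
which vanishes only when $v\cdot u_k=0$ for every $k$. Here I would invoke the hypothesis that $\mu$ is not concentrated on a closed hemisphere: if the $u_k$ failed to linearly span $\R^n$, they would lie in some hyperplane through the origin and therefore in either of the two closed hemispheres bounded by it. Hence the $u_k$ span $\R^n$, forcing $v=0$, so the Hessian is negative definite and $\Phi_P$ is strictly concave on $\Int(P)$.

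Next, I would analyze the boundary behavior. If $\xi_m\in\Int(P)$ converges to $\xi^{\ast}\in\partial P$, then $\xi^{\ast}$ lies in some facet of $P$, whose outer normal, by the assumption $P\in\mathcal{P}(u_1,\dots,u_N)$, is some $u_{k_0}$ with $\gamma_{k_0}>0$. Then $h(P,u_{k_0})-\xi_m\cdot u_{k_0}\to 0$, so the corresponding summand tends to $-\infty$, while the remaining summands stay bounded above because $h(P,u_k)-\xi\cdot u_k$ is uniformly bounded on $P$. Therefore $\Phi_P(\xi_m)\to -\infty$. Fixing any $\xi_0\in\Int(P)$, the superlevel set $\{\xi\in\Int(P):\Phi_P(\xi)\geq \Phi_P(\xi_0)\}$ is nonempty and bounded, and by the boundary analysis has positive distance from $\partial P$, so it is a compact subset of $\Int(P)$. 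Continuity then yields a maximizer $\xi(P)\in\Int(P)$, and strict concavity forces uniqueness. The only delicate point is the passage from ``not concentrated on a closed hemisphere'' to ``$u_1,\dots,u_N$ linearly span $\R^n$''; once that observation is in hand, the remainder of the proof is routine.
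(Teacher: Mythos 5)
Your proof is correct and takes essentially the same approach as the paper: establish strict concavity of $\Phi_P$ (the paper does this via the equality case of Jensen's inequality for $\log$, you via the Hessian — equivalent arguments, both resting on the fact that $\{u_1,\dots,u_N\}$ spans $\R^n$ because $\mu$ is not concentrated on a closed hemisphere), then show $\Phi_P\to-\infty$ at $\partial P$ using that every boundary point lies on a facet with normal among the $u_k$. The compact-superlevel-set step you add is implicit in the paper but is a clean way to finish.
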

\begin{proof}
Let $0<\lambda<1$
and $\xi_{1}, \xi_{2}\in\Int (P)$. From the concavity of the logarithmic function,
\begin{equation*}
\begin{split}
\lambda\Phi_{P}(\xi_{1})+(1-\lambda)\Phi_{P}(\xi_{2})&=\lambda\int_{S^{n-1}}\log\left(h(P,u)-\xi_{1}\cdot u\right)d\mu(u)\\
&\qquad+(1-\lambda)\int_{S^{n-1}}\log\left(h(P,u)-\xi_{2}\cdot u\right)d\mu(u)\\
&=\sum_{k=1}^{N}\gamma_{k}\left[\lambda\log(h(P,u_{k})-\xi_{1}\cdot
u_{k})+(1-\lambda)\log(h(P,u_{k})-\xi_{2}\cdot u_{k})\right]\\
&\leq\sum_{k=1}^{N}\gamma_{k}\log\left[h(P,u_{k})-(\lambda\xi_{1}+(1-\lambda)\xi_{2})\cdot
u_{k}\right]\\
&=\Phi_{P}(\lambda\xi_{1}+(1-\lambda)\xi_{2}),
\end{split}
\end{equation*}
with equality if and only if $\xi_{1}\cdot u_{k}=\xi_{2}\cdot u_{k}$
for all $k=1,...,N$. Since the unit vectors $u_{1},...,u_{N}$ are not concentrated on a closed hemisphere, $\mathbb{R}^{n}$=lin$\{u_{1},...,u_{N}\}$. Thus, $\xi_{1}=\xi_{2}$. Therefore, $\Phi_{P}$ is strictly concave
on $\Int(P)$.

Since $P\in\mathcal{P}(u_{1},...,u_{N})$, for any $x\in\partial P$,
there exists some $i_{0}\in\{1,\ldots,N\}$ such that
$$
h(P,u_{i_{0}})=x\cdot u_{i_{0}}.
$$
Thus, $\Phi_{P}(\xi)\rightarrow-\infty$ whenever $\xi\in\Int(P)$ and $\xi\rightarrow x$. Therefore, there exists a unique interior point
$\xi(P)$ of $P$
such that
$$
\Phi_{P}(\xi(P))=\max_{\xi\in\Int(P)}\Phi_{P}(\xi).
$$
\end{proof}
Obviously, for $\lambda>0$ and $P\in\mathcal{P}(u_{1},...,u_{N})$,
\begin{equation}\tag{3.3}\label{Equation 3.3}
\xi(\lambda P)=\lambda\xi(P),
\end{equation}
and if $P_{i}\in\mathcal{P}(u_{1},...,u_{N})$ and $P_{i}$
converges to a polytope $P$, then
$P\in\mathcal{P}(u_{1},...,u_{N})$.

For the case where $u_{1},...,u_{N}$ are in general position, the following lemma was proved in \cite{Z2}.

\begin{lemma}\label{Lemma 3.2}
Suppose $\mu=\sum_{k=1}^{N}\gamma_{k}\delta_{u_{k}}$ is a discrete measure on $S^{n-1}$ that is not concentrated on a closed hemisphere, $P_{i}\in\mathcal{P}(u_{1},...,u_{N})$ and
$P_{i}$ converges to a polytope $P$, then
$\lim_{i\rightarrow\infty}\xi(P_{i})=\xi(P)$ and
$$
\lim_{i\rightarrow\infty}\Phi_{P_{i}}(\xi(P_{i}))=\Phi_{P}(\xi(P)).
$$
\end{lemma}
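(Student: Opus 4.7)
The plan is to extract convergent subsequences of $\xi(P_i)$, identify any limit point as $\xi(P)$ via the uniqueness from Lemma~\ref{Lemma 3.1}, and then use continuity of $\Phi_{P_i}$ in both arguments to pass to the limit.

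First I would establish that $\{\xi(P_i)\}$ is bounded. Since $P_i\to P$ in Hausdorff distance, there exists $R>0$ such that $P_i\subset RB^n$ for all $i$, and hence $\xi(P_i)\in P_i\subset RB^n$. Thus every subsequence of $\{\xi(P_i)\}$ has a further convergent subsequence; by the standard limit-of-subsequences argument, it suffices to show that every convergent subsequence $\xi(P_{i_j})\to\xi^*$ satisfies $\xi^*=\xi(P)$.

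Next I would locate $\xi^*$ in $\Int(P)$. For this, note that $\xi(P)\in\Int(P)$ and $h(P_i,u_k)\to h(P,u_k)$ for each $k$, so $h(P_i,u_k)-\xi(P)\cdot u_k\to h(P,u_k)-\xi(P)\cdot u_k>0$; hence $\xi(P)\in\Int(P_i)$ for large $i$. By the maximizing property of $\xi(P_i)$,
\begin{equation*}
\Phi_{P_i}(\xi(P_i))\geq\Phi_{P_i}(\xi(P))\longrightarrow\Phi_{P}(\xi(P))>-\infty.
\end{equation*}
On the other hand, each term $\gamma_k\log(h(P_i,u_k)-\xi(P_i)\cdot u_k)$ is bounded above (since $h(P_i,u_k)-\xi(P_i)\cdot u_k\leq d(P_i)$ and $d(P_i)$ is bounded). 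If $\xi^*\in\partial P$, then there exists $k_0$ with $h(P,u_{k_0})=\xi^*\cdot u_{k_0}$, forcing $h(P_{i_j},u_{k_0})-\xi(P_{i_j})\cdot u_{k_0}\to 0$ and thus $\Phi_{P_{i_j}}(\xi(P_{i_j}))\to-\infty$, contradicting the lower bound above. Therefore $\xi^*\in\Int(P)$.

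Knowing $\xi^*\in\Int(P)$, all quantities $h(P,u_k)-\xi^*\cdot u_k$ are strictly positive, so $\log$ is continuous at each of them and
\begin{equation*}
\Phi_{P_{i_j}}(\xi(P_{i_j}))=\sum_{k=1}^{N}\gamma_k\log\bigl(h(P_{i_j},u_k)-\xi(P_{i_j})\cdot u_k\bigr)\longrightarrow\sum_{k=1}^{N}\gamma_k\log\bigl(h(P,u_k)-\xi^*\cdot u_k\bigr)=\Phi_P(\xi^*).
\end{equation*}
Combining with $\Phi_{P_{i_j}}(\xi(P_{i_j}))\geq\Phi_{P_{i_j}}(\xi(P))\to\Phi_P(\xi(P))$ gives $\Phi_P(\xi^*)\geq\Phi_P(\xi(P))$, and by the uniqueness of the maximizer in Lemma~\ref{Lemma 3.1} we conclude $\xi^*=\xi(P)$. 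This establishes $\xi(P_i)\to\xi(P)$, and the same computation then yields $\Phi_{P_i}(\xi(P_i))\to\Phi_P(\xi(P))$.

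The main obstacle is ruling out the case $\xi^*\in\partial P$: one must ensure that $\Phi_{P_i}(\xi(P_i))$ stays bounded below, which is exactly what the comparison with $\Phi_{P_i}(\xi(P))$ provides once one checks that $\xi(P)$ eventually lies in $\Int(P_i)$. Everything else is a routine continuity-and-compactness argument.
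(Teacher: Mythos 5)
Your argument is correct and follows essentially the same route as the paper: both bound $\Phi_{P_i}(\xi(P_i))$ from below by comparing with $\Phi_{P_i}(\xi(P))$, use the upper bound on each term $\gamma_k\log(h(P_i,u_k)-\xi(P_i)\cdot u_k)$ to force any accumulation point into $\Int(P)$, and then invoke continuity plus the uniqueness of the maximizer from Lemma~\ref{Lemma 3.1} to identify the limit. The only cosmetic difference is that you rule out boundary accumulation points by contradiction, whereas the paper extracts the lower bound on each term directly; the substance is identical.
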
 
\begin{proof} Since $\xi(P)\in\Int (P)$ by Lemma~\ref{Lemma 3.1}, we have
$$
\liminf_{i\rightarrow\infty}\Phi_{P_{i}}(\xi(P_{i}))\geq \liminf_{i\rightarrow\infty}\Phi_{P_{i}}(\xi(P))=\Phi_{P}(\xi(P)).
$$
Let $z$ be any accumulation point of the sequence $\{\xi(P_{i})\}$; namely, the limit of a subsequence $\{\xi(P_{i'})\}$. Since $\Phi_{P_{i}}(\xi(P_{i}))$ is bounded from below, and $h(P,u_{k})-\xi(P_i)\cdot u_{k}$ is bounded from above 
for $k=1,\ldots,N$, it follows that 
$$
\liminf_{i\rightarrow\infty}(h(P,u_{k})-\xi(P_i)\cdot u_{k})=
\liminf_{i\rightarrow\infty}(h(P_i,u_{k})-\xi(P_i)\cdot u_{k})>0
$$
for $k=1,\ldots,N$, and hence $z\in \Int(P)$. We deduce that
$$
\Phi_{P}(z)=\lim_{i'\rightarrow\infty}\Phi_{P}(\xi(P_{i'}))
=\lim_{i'\rightarrow\infty}\Phi_{P_{i'}}(\xi(P_{i'}))
\geq \liminf_{i\rightarrow\infty}\Phi_{P_i}(\xi(P_i))\geq \Phi_{P}(\xi(P)).
$$
Therefore Lemma~\ref{Lemma 3.1} yields $z=\xi(P)$.
\end{proof}

The following lemma will be needed, as well.

\begin{lemma}\label{Lemma 3.3}
Suppose $\mu=\sum_{k=1}^{N}\gamma_{k}\delta_{u_{k}}$ is a discrete measure on $S^{n-1}$ that is not concentrated on a closed hemisphere, $P\in\mathcal{P}(u_{1},...,u_{N})$, then
$$
\sum_{k=1}^{N}\gamma_{k}\frac{u_{k}}{h(P,u_{k})-\xi(P)\cdot u_k}=0.
$$
\end{lemma}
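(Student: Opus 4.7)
The plan is to recognize the identity as the first-order optimality condition (vanishing gradient) for the maximizer $\xi(P)$ of $\Phi_P$ established in Lemma~\ref{Lemma 3.1}. Since $\Phi_P$ is a finite sum of logarithms $\gamma_k \log(h(P,u_k) - \xi \cdot u_k)$, and since $\xi(P) \in \Int(P)$ lies strictly inside the region where every argument $h(P,u_k) - \xi \cdot u_k$ is positive (this positivity is exactly what defines $\Int(P)$, because $P = \{x : x \cdot u_k \le h(P,u_k) \text{ for all } k\}$), the function $\Phi_P$ is smooth in an open neighbourhood of $\xi(P)$.

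First I would fix $\xi(P)$ and observe that for any fixed direction $v \in \mathbb{R}^n$, the one-variable function $t \mapsto \Phi_P(\xi(P) + tv)$ is defined and differentiable for all sufficiently small $|t|$, and attains a local (in fact global, by concavity) maximum at $t=0$. Differentiating under the finite sum and applying the chain rule to each logarithm gives
$$
\frac{d}{dt}\Big|_{t=0} \Phi_P(\xi(P) + tv) = -\sum_{k=1}^{N} \gamma_k \, \frac{v \cdot u_k}{h(P,u_k) - \xi(P) \cdot u_k}.
$$
Setting this derivative to zero, which is the necessary condition for a local maximum, yields
$$
v \cdot \sum_{k=1}^{N} \gamma_k \, \frac{u_k}{h(P,u_k) - \xi(P) \cdot u_k} = 0.
$$
Since $v$ was arbitrary, the vector inside the dot product must vanish, which is precisely the claimed identity.

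There is essentially no obstacle here: the only point worth verifying is that differentiation and summation can be exchanged, but this is trivial because the sum is finite, and that the denominators are bounded away from zero in a neighborhood of $\xi(P)$, which is immediate from $\xi(P) \in \Int(P)$. Thus the lemma reduces entirely to the standard Fermat criterion applied to the strictly concave, smooth function $\Phi_P$ at its unique interior maximizer.
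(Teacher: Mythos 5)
Your proof is correct and follows the same approach as the paper: both treat the identity as the vanishing-gradient condition at the interior maximizer $\xi(P)$ from Lemma~\ref{Lemma 3.1}. The paper simply states this more tersely (after first translating so that $\xi(P)=0$), while you spell out the directional-derivative computation.
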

\begin{proof}
We may assume that $\xi(P)$ is the origin because for $x,\xi\in \Int P$, we have
$\Phi_{P-x}(\xi-x)=\Phi_P(\xi)$.
Since $\Phi_{P}(\xi)$ attains its maximum at the origin that is an interior point of $P$, differentiation gives the desired equation.
\end{proof}

\begin{lemma}\label{Lemma 3.4}
Suppose $\mu=\sum_{k=1}^{N}\gamma_{k}\delta_{u_{k}}$ is a discrete measure on $S^{n-1}$ that is not concentrated on a closed hemisphere,
and there exists a $P\in\mathcal{P}_{N}(u_{1},...,u_{N})$ with
$\xi(P)=0$, $V(P)=|\mu|$ such that
$$
\Phi_{P}(0)=\inf\left\{\max_{\xi\in\Int (Q)}\Phi_{Q}(\xi):
Q\in\mathcal{P}(u_{1},...,u_{N})\textmd{ and
}V(Q)=|\mu|\right\}.
$$
Then,
$$
V_{P}=\sum_{k=1}^{N}\gamma_{k}\delta_{u_{k}}.
$$
\end{lemma}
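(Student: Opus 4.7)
Write $h_k=h(P,u_k)$ and let $a_k$ be the $(n-1)$-volume of the facet of $P$ with outer normal $u_k$; since $P\in\mathcal{P}_{N}(u_1,\ldots,u_N)$, every $a_k>0$. The identity $V_P=\mu$ is equivalent to $h_ka_k=n\gamma_k$ for each $k$, so the plan is to recover this from the stationarity of the extremal problem (\ref{Equation 3.2}) at $P$ along a rich enough family of perturbations of the support numbers.

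For fixed reals $b_1,\ldots,b_N$, let $P_t$ be the polytope with outer unit normals $u_1,\ldots,u_N$ and support numbers $h(P_t,u_k)=h_k+tb_k$. Since all $N$ of the support numbers of $P$ cut out honest facets, the condition $P_t\in\mathcal{P}_{N}(u_1,\ldots,u_N)$ is open in the parameters $(h_k)$, and hence persists for $|t|$ small. Two first-order computations drive the argument. First, the standard variational formula for polytope volume gives
$$
\left.\frac{d}{dt}V(P_t)\right|_{t=0}=\sum_{k=1}^{N}a_kb_k.
$$
Second, because $\xi(P)=0$ is the interior maximizer of $\Phi_P$ by Lemma~\ref{Lemma 3.1} and $\xi(P_t)\to 0$ by Lemma~\ref{Lemma 3.2}, the envelope theorem applied to $\Psi(Q):=\Phi_Q(\xi(Q))$ yields
$$
\left.\frac{d}{dt}\Psi(P_t)\right|_{t=0}=\left.\partial_t\Phi_{P_t}(0)\right|_{t=0}=\sum_{k=1}^{N}\gamma_k\frac{b_k}{h_k}.
$$

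Now I apply a Lagrange-multiplier argument. Rescaling $P_t$ by $(|\mu|/V(P_t))^{1/n}$ produces a family in $\mathcal{P}_{N}(u_1,\ldots,u_N)$ of constant volume $|\mu|$; when $\sum_k a_k b_k=0$, the rescaling factor is $1+O(t^2)$, so it does not affect first-order variations. The minimality of $P$ in (\ref{Equation 3.2}) then forces $\sum_k\gamma_k b_k/h_k=0$ whenever $\sum_k a_kb_k=0$. Thus $(\gamma_k/h_k)_{k=1}^N$ is proportional to $(a_k)_{k=1}^N$, say $\gamma_k=\lambda\,h_ka_k$ for some $\lambda\in\mathbb{R}$. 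Summing and invoking $|\mu|=V(P)=\frac{1}{n}\sum_k h_ka_k$ gives $|\mu|=\lambda n V(P)=\lambda n|\mu|$, hence $\lambda=1/n$, i.e., $\gamma_k=\frac{1}{n}h_ka_k$ for every $k$, which is exactly the claim $V_P=\sum_k\gamma_k\delta_{u_k}$.

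The only delicate points are the envelope identity and the admissibility of the perturbation: one needs $P_t\in\mathcal{P}_{N}(u_1,\ldots,u_N)$ for small $t$, which follows from $P\in\mathcal{P}_{N}(u_1,\ldots,u_N)$ together with openness of the combinatorial type, and one needs $\xi(P_t)$ to stay bounded away from $\partial P_t$, which is supplied by Lemma~\ref{Lemma 3.2}. Neither is substantive, so the main content of the proof is genuinely the simple linear-algebra consequence of pairing the two first-order identities above.
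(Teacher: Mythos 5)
Your proof is correct and follows essentially the paper's strategy: perturb the support numbers $h(P,u_k)\mapsto h(P,u_k)+tb_k$ (the paper's $\tau_k$), compute first-order variations of volume and of $\Phi_{P_t}(\xi(P_t))$, and invoke minimality. The one non-cosmetic difference is how the identity $\frac{d}{dt}\Phi_{P_t}(\xi(P_t))\big|_{t=0}=\sum_k\gamma_k b_k/h_k$ is justified. You appeal to the envelope (Danskin) theorem, needing only the continuity of $\xi(P_t)$ supplied by Lemma~\ref{Lemma 3.2}; the paper proves the same fact by hand, invoking the implicit function theorem to get differentiability of $\xi(t)$ and then observing that the $\xi'(0)$-term is annihilated by the stationarity condition $\sum_k\gamma_k u_k/h_k=0$ (the paper's equation (3.8)) --- which is precisely the cancellation the envelope theorem packages up, so your route is shorter but leans on a Danskin-type statement the paper prefers to verify directly. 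Your Lagrange-multiplier framing (restrict to directions with $\sum_k a_k b_k=0$, deduce proportionality, then fix the constant via $V(P)=\frac{1}{n}\sum_k h_k a_k$) versus the paper's explicit rescaling $\lambda(t)=V(P_t)^{-1/n}$ and direct computation of $\Phi'(0)$ is bookkeeping; both land on the linear identity $\sum_k\bigl(\gamma_k/h_k-a_k/n\bigr)\tau_k=0$ for all $\tau$, and hence on $\gamma_k=\frac{1}{n}h_k a_k$. No gap.
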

\begin{proof}
According to Equation (\ref{Equation 3.3}), it is sufficient to
establish the lemma under the assumption that
$|\mu|=1$.

From the conditions, there exists a polytope
$P\in\mathcal{P}_{N}(u_{1},...,u_{N})$ with $\xi(P)$ is the origin and
$V(P)=1$ such that
$$
\Phi_{P}(o)=\inf\left\{\max_{\xi\in\Int (Q)}\Phi_{Q}(\xi):
Q\in\mathcal{P}(u_{1},...,u_{N})\textmd{ and }V(Q)=1\right\}.
$$

For $\tau_{1},...,\tau_{N}\in\mathbb{R}$, choose $|t|$ small
enough so that the polytope
$$
P_{t}=\bigcap_{i=1}^{N}\left\{x: x\cdot u_{i}\leq h(P,u_{i})+t\tau_{i}\right\}\in\mathcal{P}_{N}(u_{1},...,u_{N}).
$$
In particular, $h(P_t,u_{i})=h(P,u_{i})+t\tau_{i}$ for $i=1,\ldots,n$, and
Lemma 7.5.3 in Schneider \cite{SCH} yields that
$$
\frac{\partial V(P_{t})}{\partial t}=\sum_{i=1}^{N}\tau_{i}|F(P_t,u_i)|.
$$
Let $\lambda(t)=V(P_{t})^{-\frac{1}{n}}$. Then
$\lambda(t)P_{t}\in\mathcal{P}_{N}(u_{1},...,u_{N})$,
$V(\lambda(t)P_{t})=1$, $\lambda(t)$ is $C^1$ and 
\begin{equation}\tag{3.5}\label{Equation 3.5}
\lambda'(0)=-\frac{1}{n}\sum_{i=1}^{N}\tau_{i}|F(P,u_{i})|.
\end{equation}
Define $\xi(t):=\xi(\lambda(t)P_{t})$, and
\begin{equation}\tag{3.6}\label{Equation 3.6}
\begin{split}
\Phi(t)&:=\max_{\xi\in\lambda(t)P_{t}}\int_{S^{n-1}}\log\left(h(\lambda(t)P_{t},u)-\xi\cdot u\right)d\mu(u)\\
&=\sum_{k=1}^{N}\gamma_{k}\log(\lambda(t)h(P_{t},u_{k})-\xi(t)\cdot u_{k}).
\end{split}
\end{equation}
It follows from  Lemma~\ref{Lemma 3.3}, that
\begin{equation}\tag{3.7}\label{Equation 3.7}
\sum_{k=1}^{N}\gamma_{k}\frac{u_{k,i}}{\lambda(t)h(P_{t},u_{k})-\xi(t)\cdot
u_{k}}=0
\end{equation}
for $i=1,...,n$, where $u_{k}=(u_{k,1},...,u_{k,n})^{T}$.
In addition, since $\xi(P)$ is the origin, we have
\begin{equation}\tag{3.8}\label{Equation 3.8}
\sum_{k=1}^{N}\gamma_{k}\frac{u_{k}}{h(P,u_{k})}=0.
\end{equation}

Let $F=(F_1,\ldots,F_n)$ be a function from a small neighbourhood of the origin in $\mathbb{R}^{n+1}$ to $\mathbb{R}^n$ such that
$$
F_{i}(t,\xi_{1},...,\xi_{n})=\sum_{k=1}^{N}\gamma_{k}\frac{u_{k,i}}{\lambda(t)h(P_{t},u_{k})-(\xi_{1}u_{k,1}+...+\xi_{n}u_{k,n})}
$$
for $i=1,...,n$. Then,
\begin{eqnarray*}
\frac{\partial F_{i}}{\partial t}\bigg|_{(t, \xi_{1},...,\xi_{n})}&=&\sum_{k=1}^{N}\gamma_{k}\frac{-u_{k,i}(\lambda'(t)h(P_{t},u_{k})+\lambda(t)\tau_k)}
{[\lambda(t)h(P_{t},u_{k})-(\xi_{1}u_{k,1}+...+\xi_{n}u_{k,n})]^{2}}\\
\frac{\partial F_{i}}{\partial\xi_{j}}\bigg|_{(t, \xi_{1},...,\xi_{n})}&=&\sum_{k=1}^{N}\gamma_{k}\frac{u_{k,i}u_{k,j}}{[\lambda(t)h(P_{t},u_{k})-(\xi_{1}u_{k,1}+...+\xi_{n}u_{k,n})]^{2}}
\end{eqnarray*}
are continuous on a small neighborhood  of $(0,0,...,0)$ with
$$
\left(\frac{\partial F}{\partial\xi}\bigg|_{(0,...,0)}\right)_{n\times
n}=\sum_{k=1}^{N}\frac{\gamma_{k}}{h(P,u_{k})^{2}}u_{k}u_{k}^{T},
$$
where $u_{k}u_{k}^{T}$ is an $n\times n$ matrix. Since the unit vectors $u_{1},...,u_{N}$ are not concentrated on a closed hemisphere, $\mathbb{R}^{n}=$lin$\{u_{1},...,u_{N}\}$. Thus, for any $x\in\mathbb{R}^{n}$ with $x\neq0$, there exists a
$u_{i_{0}}\in\{u_{1},...,u_{N}\}$ such that
$u_{i_{0}}\cdot x\neq0$. Then,
\begin{equation*}
\begin{split}
x^{T}\left(\sum_{k=1}^{N}\frac{\gamma_{k}}{h(P,u_{k})^{2}}u_{k}u_{k}^{T}\right)x&=\sum_{k=1}^{N}\frac{\gamma_{k}}{h(P,u_{k})^{2}}(x\cdot u_{k})^{2}\\
&\geq\frac{\gamma_{i_{0}}}{h(P,u_{i_{0}})^{2}}(x\cdot u_{i_{0}})^{2}>0.
\end{split}
\end{equation*}
Therefore, $(\frac{\partial F}{\partial\xi}\big|_{(0,...,0)})$ is positive
definite. By this, the fact that $F_{i}(0,...,0)=0$ for $i=1,...,n$, the fact that $\frac{\partial F_{i}}{\partial\xi_{j}}$ is continuous on a neighborhood  of $(0,0,...,0)$ for all $1\leq i, j\leq n$ and the implicit function theorem, we have
$$
\xi'(0)=(\xi_{1}'(0),...,\xi_{n}'(0))
$$
exists.

From the fact that $\Phi(0)$ is a minimizer of $\Phi(t)$ (in Equation
(\ref{Equation 3.6})), Equation (\ref{Equation 3.5}), the fact that
$\sum_{k=1}^{N}\gamma_{k}=1$ and Equation (\ref{Equation 3.8}), we
have
\begin{equation*}
\begin{split}
0&=\Phi'(0)\\
&=\sum_{k=1}^{N}\gamma_{k}\frac{\lambda'(0)h(P,u_{k})+\lambda(0)\frac{dh(P_{t},u_{k})}{dt}\big|_{t=0}-\xi'(0)\cdot
u_{k}}{h(P,u_{k})}\\
&=\sum_{k=1}^{N}\gamma_{k}\frac{-\frac{1}{n}(\sum_{i=1}^{N}\tau_{i}|F(P,u_{i})|)h(P,u_{k})+\tau_{k}-\xi'(0)\cdot
u_{k}}{h(P,u_{k})}\\
&=-\sum_{i=1}^{N}\frac{|F(P,u_{i})|\tau_{i}}{n}+\sum_{k=1}^{N}\frac{\gamma_{k}\tau_{k}}{h(P,u_{k})}-\xi'(0)\cdot\left[\sum_{k=1}^{N}\gamma_{k}\frac{u_{k}}{h(P,u_{k})}\right]\\
&=\sum_{k=1}^{N}\left(\frac{\gamma_{k}}{h(P,u_{k})}-\frac{|F(P,u_{k})|}{n}\right)\tau_{k}.
\end{split}
\end{equation*}
Since $\tau_{1},...,\tau_{N}$ are arbitrary, we deduce that
$\gamma_{k}=\frac{1}{n}h(P,u_{k})|F(P,u_{k})|$ for $k=1,...,N$.
\end{proof}

\section{Existence of a solution of the extremal problem}

In this section, we prove Lemma~\ref{Lemma 4.4} about the existence of a solution of problem (\ref{Equation 3.2}) for the case where the discrete measure is not concentrated on any closed hemisphere of $S^{n-1}$ and satisfies the strict essential subspace concentration inequality.
Having the results of the previous section, the essential new ingredient is the following statement (see Lemma~\ref{Lemma 4.2}). 

If $\mu$ is a discrete measure on $S^{n-1}$ that is not concentrated on any closed hemisphere of $S^{n-1}$ and satisfies the strict essential subspace concentration inequality, and $\{P_m\}$ is a sequence of polytopes of unit volume such that
 the set of outer unit normals of $P_{m}$ is a subset of the support of $\mu$, and $\lim_{m\rightarrow\infty}d(P_{m})=\infty$
then
 $$
\lim_{m\to\infty}\Phi_{P_m}(\xi(P_m))=\infty.
 $$
It is equivalent to prove that any subsequence of $\{P_m\}$ has some subsequence $\{P_{m'}\}$ such that
$\lim_{m\to\infty}\Phi_{P_{m'}}(\xi(P_{m'}))=\infty$.

To indicate the idea, we sketch the argument for $n=2$.
 Let ${\rm supp}\,\mu=\{u_1,\ldots,u_N\}$, and let 
 $w_m=\min\{h_{P_m}(u)+h_{P_m}(-u):u\in S^1\}$ be the minimal width of $P_m$. Since $\lim_{m\rightarrow\infty}d(P_{m})=\infty$ and $V(P_m)=1$, we have $\lim_{m\rightarrow\infty}w_m=0$.
As $P_m$ is a polygon, we may assume that $w_m=h_{P_m}(u_1)+h_{P_m}(-u_1)$ possibly  after taking a subsequence and reindexing. If the angle of $u_1$ and $u_i$ is $\alpha_i\in(0,\pi)$ then $V_1(F(P_m,u_i))\leq w_m/\sin\alpha_i$, thus
$\lim_{m\rightarrow\infty}d(P_{m})=\infty$ implies that $-u_1\in{\rm supp}\,\mu$ for large $m$, say $u_2=-u_1$. Let $v\in S^1$ be orthogonal to $u_1$, and let $\gamma_i=\mu(\{u_i\})$ for $i=1,\ldots,N$. We may translate $P_m$ in a way such that $o\in\Int P_m$  in a way such that
 $h_{P_m}(u_1)=h_{P_m}(u_2)=w_m/2$, and
$h_{P_m}(v)= h_{P_m}(-v)$ hold for large $m$. Thus $V(P_m)=1$ yields the existence of a constant $c_1>0$ such that
  $h_{P_m}(u_i)>c_1/w_m$ for $i=3,\ldots,N$. Now ${\rm lin}\,u_1$ is an essential subspace with respect to $\mu$, and hence
$\gamma_1+\gamma_2<\gamma_3+\ldots+\gamma_N$
according to the strict essential subspace concentration inequality. Therefore
writing $c_2=\min\{2,c_1\}$, we have
\begin{eqnarray*}
\liminf_{m\to\infty}\exp\left(\Phi_{P_m}(\xi(P_m))\right)&\geq&
\liminf_{m\to\infty}\exp\left(\Phi_{P_m}(o)\right)=\liminf_{m\to\infty}\prod_{i=1}^N h_{P_m}(u_i)^{\gamma_i}\\
&\geq& \lim_{m\to\infty}
\left(\frac{w_m}2\right)^{\gamma_1+\gamma_2}\left(\frac{c_1}{w_m}\right)^{\gamma_3+\ldots+\gamma_N}
\geq \lim_{m\to\infty}\left(\frac{c_2}{w_m}\right)^{\gamma_3+\ldots+\gamma_N-\gamma_1-\gamma_2}=\infty.
\end{eqnarray*}

In the higher dimensional case, the idea is the very same. Only instead of one essential linear subspace like in the planar case,  we will find essential subspaces
 $X_0\subset\ldots\subset X_{q-1}$
in a way such that for $j=0,\ldots,q-1$, $P_m|_{X_j^\bot}$ is "much larger" than $P_m|_{X_j}$ for large $m$
 after taking suitable subsequence. 
 This is achieved in the preparatory statements Lemmas~\ref{prep41} to \ref{prep44}.

Given $N$ sequences, the first two observations will help to do book keeping of how the limits of the sequences compare. 

\begin{lemma}
\label{prep41}
Let $\{h_{1j}\}_{j=1}^{\infty},...,\{h_{Nj}\}_{j=1}^{\infty}$ be $N$ $(N\geq2)$ sequences of real numbers. Then, there exists a subsequence, $\{j_{n}\}_{n=1}^{\infty}$, of $\mathbb{N}$ and a rearrangement, $i_{1},...,i_{N}$, of $1,...,N$ such that
$$
h_{i_{1}j_{n}}\leq h_{i_{2}j_{n}}\leq...\leq h_{i_{N}j_{n}},
$$
for all $n\in\mathbb{N}$.
\end{lemma}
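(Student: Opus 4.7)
The statement is a pure pigeonhole observation: along a common subsequence we want the $N$ sequences to line up in a fixed weak order. My plan is to reduce it directly to the finiteness of the symmetric group $S_N$.

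For each $j\in\mathbb{N}$, the $N$ real numbers $h_{1j},\ldots,h_{Nj}$ can be sorted in nondecreasing order. Concretely, choose a permutation $\sigma_{j}$ of $\{1,\ldots,N\}$ such that
\[
h_{\sigma_{j}(1)\,j}\leq h_{\sigma_{j}(2)\,j}\leq\cdots\leq h_{\sigma_{j}(N)\,j}.
\]
Ties are resolved arbitrarily, so $\sigma_j$ exists for every $j$, though it need not be unique. This gives a function $j\mapsto \sigma_j$ from $\mathbb{N}$ to the finite set $S_N$.

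Since $|S_N|=N!<\infty$, the pigeonhole principle yields a single permutation $\sigma\in S_N$ such that $\sigma_j=\sigma$ for infinitely many $j$. Enumerating these indices in increasing order gives a subsequence $\{j_n\}_{n=1}^\infty$ of $\mathbb{N}$ along which
\[
h_{\sigma(1)\,j_n}\leq h_{\sigma(2)\,j_n}\leq\cdots\leq h_{\sigma(N)\,j_n}\qquad\text{for every }n\in\mathbb{N}.
\]
Setting $i_k:=\sigma(k)$ for $k=1,\ldots,N$ produces the required rearrangement of $1,\ldots,N$, and the lemma follows.

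There is no real obstacle: the only point worth a moment of care is the choice of $\sigma_j$ when some of the values $h_{kj}$ coincide, but any sorting permutation works because only the weak inequalities are required. Alternatively, one could argue by induction on $N$, splitting $\mathbb{N}$ according to which of the two inequalities $h_{1j}\leq h_{2j}$ or $h_{2j}\leq h_{1j}$ holds for infinitely many $j$, then applying the inductive hypothesis to the remaining $N-1$ sequences along that subsequence; this yields the same conclusion.
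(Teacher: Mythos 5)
Your proof is correct, and it takes a genuinely different route from the paper. The paper argues by induction on $N$: for each $j$ it looks at $\max\{h_{1j},\ldots,h_{Nj}\}$, uses pigeonhole to find one index that achieves the maximum along an infinite set of $j$'s, and then applies the inductive hypothesis to the remaining $N-1$ sequences restricted to that subsequence. You instead attach to each $j$ a sorting permutation $\sigma_j\in S_N$ and apply pigeonhole once to the finite set $S_N$, which produces the subsequence and the rearrangement in a single step. The two arguments are equally elementary; yours is shorter and avoids the bookkeeping of nested subsequences, at the modest cost of invoking $S_N$ as an explicit auxiliary object, while the paper's induction is more self-contained and mirrors the iterative subsequence extraction used elsewhere in that section (notably in Lemma~\ref{prep42}). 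You also correctly note that ties pose no issue since only weak inequalities are required, and you flag the inductive alternative yourself, which is essentially the paper's proof.
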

\begin{proof}
We prove it by induction on $N$. We first prove the case for $N=2$. For $j\in\mathbb{N}$, consider the sequence
$$
h_{j}=\max\{h_{1j}, h_{2j}\}.
$$
Since $\{h_{j}\}_{j=1}^{\infty}$ is an infinite sequence and $h_{j}$ either equals to $h_{1j}$ or equals to $h_{2j}$ for all $j\in\mathbb{N}$, there exists an $i_{2}\in\{1, 2\}$ and a subsequence, $\{j_{n}\}_{n=1}^{\infty}$, of $\mathbb{N}$ such that
$$
h_{j_{n}}=h_{i_{2}j_{n}}
$$
for all $n\in\mathbb{N}$. Let $i_{1}\in\{1, 2\}$ with $i_{1}\neq i_{2}$. Then,
$$
h_{i_{1}j_{n}}\leq h_{i_{2}j_{n}},
$$
for all $n\in\mathbb{N}$.

Suppose the lemma is true for $N=k$ (with $k\geq2$), we next prove that the lemma is true for $N=k+1$. For $j\in\mathbb{N}$, consider the sequence
$$
h_{j}=\max\{h_{1j}, h_{2j},...,h_{k+1 j}\}.
$$
Since $\{h_{j}\}_{j=1}^{\infty}$ is an infinite sequence and $h_{j}$ equals one of $h_{1j}, h_{2j},...,h_{k+1 j}$ for all $j\in\mathbb{N}$, there exists an $i_{k+1}\in\{1, 2,...,k+1\}$ and a subsequence, $\{j_{n}\}_{n=1}^{\infty}$, of $\mathbb{N}$ such that
$$
h_{j_{n}}=h_{i_{k+1}j_{n}}
$$
for all $n\in\mathbb{N}$.

Consider the sequences $\{h_{ij_{n}}\}_{n=1}^{\infty}$ ($1\leq i\leq k+1$ with $i\neq i_{k+1}$). By the inductive hypothesis, there exists a subsequence, $j_{n_{l}}$, of $j_{n}$ and a rearrangement, $i_{1},...,i_{k}$, of $1,...,\widehat{i_{k+1}},...,k+1$ such that
$$
h_{i_{1}j_{n_{l}}}\leq h_{i_{2}j_{n_{l}}}\leq...\leq h_{i_{k}j_{n_{l}}}
$$
for all $l\in\mathbb{N}$. By this and the fact that $h_{j_{n_{l}}}=h_{i_{k+1}j_{n_{l}}}$ for all $l\in\mathbb{N}$, we have
$$
h_{i_{1}j_{n_{l}}}\leq h_{i_{2}j_{n_{l}}}\leq...\leq h_{i_{k}j_{n_{l}}}\leq h_{i_{k+1}j_{n_{l}}}
$$
for all $l\in\mathbb{N}$.
\end{proof}

\begin{lemma}
\label{prep42}
Let $\{h_{1j}\}_{j=1}^{\infty},...,\{h_{Nj}\}_{j=1}^{\infty}$ be $N$ $(N\geq2)$ sequences of real numbers with
$$
h_{1j}\leq h_{2j}\leq...\leq h_{Nj}
$$
for all $j\in\mathbb{N}$, $\lim_{j\rightarrow\infty}h_{1j}=0$ and $\lim_{j\rightarrow\infty}h_{Nj}=\infty$.
Then, there exist $q\geq1$,
$$
1=\alpha_{0}<\alpha_{1}<...<\alpha_{q}\leq N<N+1=\alpha_{q+1}
$$
and a subsequence, $\{j_{n}\}_{n=1}^{\infty}$, of $\mathbb{N}$ such that if $i=1,...,q$, then
$$
\lim_{n\rightarrow\infty}\frac{h_{\alpha_{i}j_{n}}}{h_{\alpha_{i-1}j_{n}}}=\infty,
$$
if $i=0,...,q,$ and $\alpha_{i}\leq k\leq\alpha_{i+1}-1$, then
$$
\lim_{n\rightarrow\infty}\frac{h_{kj_{n}}}{h_{\alpha_{i}j_{n}}}
$$
exists and equals to a positive number.
\end{lemma}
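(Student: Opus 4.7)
The plan is to reduce everything to controlling the consecutive ratios $h_{(k+1)j}/h_{kj}$ and marking the places where they blow up. We may assume that the sequences are eventually positive after a preliminary extraction (in the application the $h_{kj}$ will be support-function values of polytopes containing the origin in their interior, so this is automatic there). With this in hand, I would first perform $N-1$ successive diagonal extractions to obtain a single subsequence, still denoted $\{j_n\}$, along which each consecutive ratio
\[
r_k := \lim_{n\to\infty} \frac{h_{(k+1)j_n}}{h_{kj_n}} \in [1,\infty]
\]
exists, the lower bound $r_k \geq 1$ being forced by the monotonicity assumption $h_{kj} \leq h_{(k+1)j}$.

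Next, let $S := \{k \in \{1,\ldots,N-1\} : r_k = \infty\}$ and list its elements in increasing order as $k_1 < k_2 < \ldots < k_q$. Define $\alpha_0 := 1$, $\alpha_i := k_i + 1$ for $i=1,\ldots,q$, and $\alpha_{q+1} := N+1$. The only nontrivial point in this definition is showing $q \geq 1$, i.e.\ that $S$ is non-empty. Here I would telescope: the identity
\[
\frac{h_{Nj_n}}{h_{1j_n}} = \prod_{k=1}^{N-1} \frac{h_{(k+1)j_n}}{h_{kj_n}}
\]
combined with the hypotheses $h_{1j}\to 0$ and $h_{Nj}\to\infty$ forces the left-hand side to tend to $\infty$; were every $r_k$ finite, the product would converge to a finite number, a contradiction. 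Hence at least one $r_k$ equals $\infty$ and $q\geq 1$.

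The two stated limit conclusions then fall out from the same telescoping trick. For $\alpha_i \leq k \leq \alpha_{i+1}-1$, write $h_{kj_n}/h_{\alpha_i j_n} = \prod_{l=\alpha_i}^{k-1} r_{lj_n}$ (empty product when $k = \alpha_i$). By construction, no index in the range $[\alpha_i,\alpha_{i+1}-2]$ belongs to $S$, so every factor appearing has a finite limit in $[1,\infty)$, and the product converges to a positive real number. On the other hand, for $i = 1,\ldots,q$ the product $h_{\alpha_i j_n}/h_{\alpha_{i-1} j_n} = \prod_{l=\alpha_{i-1}}^{\alpha_i - 1} r_{lj_n}$ contains the factor at $l = \alpha_i - 1 = k_i$ with $r_{k_i} = \infty$, while all remaining factors have finite limits $\geq 1$, so the product diverges to $\infty$. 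The only real obstacle here is index bookkeeping; the lone conceptual content is the telescoping argument forcing $q \geq 1$.
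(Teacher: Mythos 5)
Your argument is correct, and it reaches the conclusion by the same basic mechanism as the paper: pass to a subsequence along which the relevant ratio limits exist, mark the indices where a ratio jumps to infinity, and derive $q\geq 1$ from the fact that $h_{Nj}/h_{1j}\to\infty$. The bookkeeping differs. The paper works with ratios $h_{ij}/h_{\alpha_{\ell}j}$ relative to a running reference index $\alpha_\ell$, iterating a $\limsup$-based search for the next blow-up index and extracting nested subsequences as it goes; you instead compute limits of all consecutive ratios $h_{(k+1)j}/h_{kj}$ at once after a single diagonal extraction, define the blow-up set $S$ once and for all, and then obtain both stated limit conclusions and $q\geq 1$ from the same telescoping identity. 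Your organization is somewhat tidier for exactly that reason. One small caveat applies to both proofs: the lemma as written allows arbitrary real $h_{kj}$, so dividing by $h_{1j}$ and asserting that the consecutive ratios lie in $[1,\infty]$ requires $h_{1j}>0$; your ``preliminary extraction to positive terms'' is not always possible (for instance if $h_{1j}<0$ for every $j$). This is not a defect relative to the paper, which also divides by $h_{1j}$ with the same tacit positivity assumption, and in the only application (Lemma~\ref{prep44}) the $h$'s are support values of polytopes containing the origin in their interior, hence positive; but it is worth noting that the hypothesis is being used.
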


\begin{proof}
Let $\alpha_{0}=1$. By conditions,
$$
\frac{h_{1j}}{h_{1j}}\leq\frac{h_{2j}}{h_{1j}}\leq...\leq\frac{h_{Nj}}{h_{1j}},
$$
$\overline{\lim}_{j\rightarrow\infty}\frac{h_{ij}}{h_{1j}}$ either exists (equals to a positive number) or goes to $\infty$, and $\overline{\lim}_{j\rightarrow\infty}\frac{h_{Nj}}{h_{1j}}=\infty.$ Thus, there exists an $\alpha_{1}$ ($1<\alpha_{1}\leq N$) such that for $1\leq i\leq\alpha_{1}-1$,
$$
\overline{\lim}_{j\rightarrow\infty}\frac{h_{ij}}{h_{1j}}<\infty
$$
and
$$
\overline{\lim}_{j\rightarrow\infty}\frac{h_{\alpha_{1}j}}{h_{1j}}=\infty.
$$
Hence, we can choose a subsequence, $\{j_{n}'\}_{n=1}^{\infty}$, of $\mathbb{N}$ such that
$$
\lim_{n\rightarrow\infty}\frac{h_{\alpha_{1}j_{n}'}}{h_{1j_{n}'}}=\infty,
$$
and for $1\leq i\leq\alpha_{1}-1$,
$$
\overline{\lim}_{n\rightarrow\infty}\frac{h_{ij_{n}'}}{h_{1j_{n}'}}\leq\overline{\lim}_{j\rightarrow\infty}\frac{h_{ij}}{h_{1j}}<\infty.
$$

By choosing $\alpha_{1}-2$ times subsequences of $j_{n}'$, we can find a subsequence, $\{j_{n}''\}_{n=1}^{\infty}$, of $\{j_{n}'\}_{n=1}^{\infty}$ such that
$$
\lim_{n\rightarrow\infty}\frac{h_{\alpha_{1}j_{n}''}}{h_{1j_{n}''}}=\infty,
$$
and for $1\leq i\leq\alpha_{1}-1$,
$$
\lim_{n\rightarrow\infty}\frac{h_{ij_{n}''}}{h_{1j_{n}''}}
$$
exists and equals to a positive number.

By repeating (at most $N-\alpha_{1}$ times) similar arguments for the sequences $\{h_{ij''_{n}}\}_{n=1}^{\infty}$ $(\alpha_{1}\leq i\leq N)$, we can find $q\geq1$,
$$
1=\alpha_{0}<\alpha_{1}<...<\alpha_{q}\leq N<N+1=\alpha_{q+1}
$$
and a subset, $\{j_{n}\}_{n=1}^{\infty}$, of $\mathbb{N}$ that satisfy the conditions in the lemma.
\end{proof}

The following lemma compares positive hull and linear hull.

\begin{lemma}
\label{prep43}
Suppose $u_{1},...,u_{l}\in S^{d-1}$ ($d\geq2$), $\mathbb{R}^{d}=\textmd{lin}\{u_{1},...,u_{l}\}$, and $u_{1},...,u_{l}$ are not concentrated on a closed hemisphere of $S^{d-1}$, then
$$
\mathbb{R}^{d}=\textmd{pos}\{u_{1},...,u_{l}\}.
$$
Moreover, there exists $\lambda>0$ depending on $u_{1},...,u_{l}$ such that
 any $u\in S^{d-1}$ can be written in the form
$$
u=a_{i_{1}}u_{i_{1}}+...+a_{i_{d}}u_{i_{d}}
$$
where $\{u_{i_{1}},...,u_{i_{d}}\}\subset \{u_{1},...,u_{l}\}$ and $0\leq a_{i_{1}},...,a_{i_{d}}\leq\lambda$.
\end{lemma}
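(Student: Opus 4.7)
The plan is to prove the two claims separately: the positive hull identity by a standard separation argument, and the uniform bound on coefficients by covering $S^{d-1}$ with finitely many simplicial cones and invoking compactness on each.

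For the first claim, set $C:=\textmd{pos}\{u_1,\ldots,u_l\}$. Since $C$ is finitely generated, it is a closed convex cone. If $C\neq \mathbb{R}^d$, pick $v\in \mathbb{R}^d\setminus C$; by the hyperplane separation theorem applied to the closed convex set $C$ and the point $v$, there exists a nonzero $w\in \mathbb{R}^d$ with $w\cdot v<0\leq w\cdot x$ for every $x\in C$. In particular $w\cdot u_i\geq 0$ for every $i$, so $u_1,\ldots,u_l$ all lie in the closed hemisphere $\{x\in S^{d-1}:w\cdot x\geq 0\}$, contradicting the hypothesis. Hence $\mathbb{R}^d=\textmd{pos}\{u_1,\ldots,u_l\}$.

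For the second claim, I first upgrade Carath\'eodory's theorem for positive hulls to a representation using exactly $d$ linearly independent vectors of $\{u_1,\ldots,u_l\}$. Any $u\in S^{d-1}\subset \mathbb{R}^d=\textmd{pos}\{u_1,\ldots,u_l\}$ admits an expression $u=\sum_{j=1}^{k}c_j u_{i_j}$ with $c_j\geq 0$ and $u_{i_1},\ldots,u_{i_k}$ linearly independent, so $k\leq d$. The hypothesis $\mathbb{R}^d=\textmd{lin}\{u_1,\ldots,u_l\}$ lets me enlarge $\{u_{i_1},\ldots,u_{i_k}\}$ to a basis $\{u_{i_1},\ldots,u_{i_d}\}$ of $\mathbb{R}^d$ by appending $d-k$ further vectors taken from $\{u_1,\ldots,u_l\}$; assigning these additional vectors coefficient zero produces a representation $u=a_{i_1}u_{i_1}+\ldots+a_{i_d}u_{i_d}$ with $a_{i_j}\geq 0$ and $\{u_{i_1},\ldots,u_{i_d}\}$ a basis of $\mathbb{R}^d$.

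To obtain the uniform bound $\lambda$, let $\mathcal{I}$ denote the finite family of $d$-element subsets $I\subset\{1,\ldots,l\}$ for which $\{u_i:i\in I\}$ is a basis of $\mathbb{R}^d$, and set $C_I:=\textmd{pos}\{u_i:i\in I\}$. Each $C_I$ is a closed simplicial cone, and for $u\in C_I$ the coordinates $a_i^I(u)$ of $u$ in the basis $\{u_i:i\in I\}$ are restrictions of linear functionals on $\mathbb{R}^d$, hence continuous, and are non-negative precisely on $C_I$. Since $S^{d-1}\cap C_I$ is compact, each $a_i^I$ attains a finite maximum there, and I may take $\lambda$ to be the largest of these maxima as $I$ ranges over $\mathcal{I}$ and $i$ over $I$. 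The only substantive step is the combinatorial extension of a Carath\'eodory representation to a full linearly independent $d$-tuple, which is exactly where the spanning hypothesis enters; the remainder is a routine compactness argument.
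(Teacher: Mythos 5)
Your proof is correct, and the route you take for the uniform bound differs genuinely from the paper's. The paper works with the polytope $Q=\mathrm{Conv}\{u_1,\ldots,u_l\}$, observes that the hemisphere hypothesis forces $o\in\Int Q$, hence $rB^d\subset Q$ for some $r>0$, and then for $u\in S^{d-1}$ scales to $tu\in\partial Q$ with $t\geq r$, uses Carath\'eodory in a facet of $Q$ to write $tu$ as a convex combination of $d$ vertices $u_{i_1},\ldots,u_{i_d}$, and divides by $t$; since each convex-combination coefficient is at most $1$, this yields the explicit bound $\lambda=1/r$. You instead prove $\mathbb{R}^d=\mathrm{pos}\{u_1,\ldots,u_l\}$ directly by separation, invoke the conical Carath\'eodory theorem, pad a linearly independent representation to a full basis drawn from $\{u_1,\ldots,u_l\}$ (this is exactly where spanning enters, as you note), and then obtain $\lambda$ by compactness: the coordinate functionals associated with each basis $I\in\mathcal{I}$ are continuous and nonnegative on the closed simplicial cone $C_I$, the sets $C_I$ cover $S^{d-1}$, and $S^{d-1}\cap C_I$ is compact, so a finite maximum exists. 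Both arguments are sound; the paper's is more geometric and delivers a clean numerical bound via the inradius, while yours is more combinatorial and avoids the excursion through facets of $Q$. One small point worth making explicit in your separation step: since $C$ is a cone containing the origin, the separating constant can be taken to be $0$ (otherwise scaling a point of $C$ would violate the inequality), which is what gives $w\cdot x\geq 0$ on $C$; you state the conclusion correctly but the intermediate justification is slightly compressed.
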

\begin{proof} Let $Q$ be the convex hull of $\{u_{1},...,u_{l}\}$, which is a polytope. Since $u_{1},...,u_{l}$ are not concentrated on a closed hemisphere of $S^{d-1}$, the origin is an interior point of $Q$. In particular, $rB^d\subset Q$ for some $r>0$.

For $u\in S^{d-1}$, there exists some $t\geq r$ such that $tu\in \partial Q$. It follows that $tu\in F$ for some facet $F$ of $Q$. We deduce from the Charateodory theorem that there exists vertices $u_{i_{1}},...,u_{i_{d}}$ of $F$ that $tu$ lies in their convex hull. In other words,
$$
tu=\alpha_{i_{1}}u_{i_{1}}+...+\alpha_{i_{d}}u_{i_{d}}
$$
where $\alpha_{i_{1}},\ldots,\alpha_{i_{d}}\geq 0$ and $\alpha_{i_{1}}+\ldots+\alpha_{i_{d}}=1$. Therefore we choose
$a_{i_{j}}=\alpha_{i_{j}}/t\leq 1/r$ for $j=1,\ldots,d$, which in turn satisfy
$u=a_{i_{1}}u_{i_{1}}+...+a_{i_{d}}u_{i_{d}}$.
In particular, we may take $\lambda=1/r$.
\end{proof}

The following lemma will be the last preparatory statement.

\begin{lemma}
\label{prep44}
Suppose $\mu$ is a discrete measure on $S^{n-1}$ that is not concentrated on any closed hemisphere of $S^{n-1}$ with supp($\mu$)$=\{u_{1},...,u_{N}\}$ and $\mu(u_{i})=\gamma_{i}$ for $i=1,...,N$. If $P_{m}$ is a sequence of polytopes with $V(P_{m})=1$, $\xi(P_{m})$ is the origin, the set of outer unit normals of $P_{m}$ is a subset of $\{u_{1},...,u_{N}\}$, $\lim_{m\rightarrow\infty}d(P_{m})=\infty$ and
$$
h(P_{m},u_{1})\leq h(P_{m},u_{2})\leq...\leq h(P_{m},u_{N})
$$
for all $m\in\mathbb{N}$. Then, there exist $q\geq1$, and $1=\alpha_{0}<\alpha_{1}<...<\alpha_{q}\leq N<N+1=\alpha_{q+1}$ such that if $j=1,...,q$, then
\begin{equation}\tag{4.0a}\label{Equation 4.0a}
\lim_{m\rightarrow\infty}\frac{h(P_{m},u_{\alpha_{j}})}{h(P_{m},u_{\alpha_{j-1}})}=\infty,
\end{equation}
and if $j=0,...,q$ and $\alpha_{j}\leq k\leq\alpha_{j+1}-1$, then
\begin{equation}\tag{4.0b}\label{Equation 4.0b}
\lim_{m\rightarrow\infty}\frac{h(P_{m},u_{k})}{h(P_{m},u_{\alpha_{j}})}=t_{kj}<\infty.
\end{equation}
Moreover, $X_{j}=\textmd{pos}\{u_{1},...,u_{\alpha_{j+1}-1}\}$ are subspaces of $\mathbb{R}^{n}$ for all $0\leq j\leq q$ and
$$
1\leq\dim(X_{0})<\dim(X_{1})<...<\dim(X_{q})=n.
$$
\end{lemma}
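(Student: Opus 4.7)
The plan has two phases. First, I would check that $\{h(P_m,u_1)\}$ is bounded above and that $h(P_m,u_N)\to\infty$. Set $Q=\bigcap_{i=1}^{N}\{x\in\mathbb{R}^n:x\cdot u_i\leq 1\}$; since $u_1,\ldots,u_N$ are not concentrated on a closed hemisphere, $Q$ is a bounded polytope containing the origin in its interior. The inclusions $h(P_m,u_1)\,Q\subseteq P_m\subseteq h(P_m,u_N)\,Q$ combined with $V(P_m)=1$ and $d(P_m)\to\infty$ give $h(P_m,u_1)\leq V(Q)^{-1/n}$ and $h(P_m,u_N)\to\infty$, so $h(P_m,u_N)/h(P_m,u_1)\to\infty$. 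A standard ratio-extraction (the same inductive technique as in the proof of Lemma~\ref{prep42}), applied to successive ratios $h(P_m,u_i)/h(P_m,u_{\alpha_{j}})$, then produces, after passing to a subsequence, indices $1=\alpha_0<\alpha_1<\ldots<\alpha_q\leq N$ with $q\geq 1$ satisfying (\ref{Equation 4.0a}) and (\ref{Equation 4.0b}).

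Set $L_j=\textmd{lin}\{u_1,\ldots,u_{\alpha_{j+1}-1}\}$. I would prove $X_j=L_j$, and $\dim L_j>\dim L_{j-1}$ for $j\geq 1$, by induction on $j$. The key tool is the identity $\sum_{k=1}^{N}\gamma_k u_k/h(P_m,u_k)=0$ supplied by Lemma~\ref{Lemma 3.3} (using $\xi(P_m)=0$). For the base case, multiplying by $h(P_m,u_1)$ and letting $m\to\infty$ sends the terms with $k\geq\alpha_1$ to zero and makes those with $k<\alpha_1$ converge, producing the positive linear dependence $\sum_{k=1}^{\alpha_1-1}(\gamma_k/t_{k,0})u_k=0$; since all coefficients are strictly positive and the $u_k$'s span $L_0$, they positively span $L_0$, so $X_0=L_0$.

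For the inductive step, assume $X_{j-1}=L_{j-1}$. First I would verify $u_k\notin L_{j-1}$ for every $\alpha_j\leq k\leq\alpha_{j+1}-1$: otherwise one could write $u_k=\sum_{i<\alpha_j}a_iu_i$ with $a_i\geq 0$ using the inductive hypothesis, and the inclusion $P_m\subseteq\bigcap_i\{x:x\cdot u_i\leq h(P_m,u_i)\}$ would yield $h(P_m,u_k)\leq(\sum a_i)\,h(P_m,u_{\alpha_j-1})$, contradicting $h(P_m,u_k)/h(P_m,u_{\alpha_j-1})\to\infty$ (a consequence of (\ref{Equation 4.0a}) and (\ref{Equation 4.0b})). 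This already gives $\dim L_j>\dim L_{j-1}$. Next, projecting the main identity onto $L_{j-1}^\perp$ eliminates every term with $k<\alpha_j$; multiplication by $h(P_m,u_{\alpha_j})$ and the limit $m\to\infty$ then yield
$$
\sum_{k=\alpha_j}^{\alpha_{j+1}-1}\frac{\gamma_k}{t_{k,j}}\,u_k|_{L_{j-1}^\perp}=0,
$$
and since each $u_k|_{L_{j-1}^\perp}\neq 0$, these projected vectors positively span $L_{j-1}^\perp\cap L_j$. Given $v\in L_j$, decompose $v=v^{(0)}+v^{(1)}$ with $v^{(0)}\in L_{j-1}$ and $v^{(1)}\in L_{j-1}^\perp\cap L_j$; write $v^{(1)}=\sum_{k=\alpha_j}^{\alpha_{j+1}-1} b_k\, u_k|_{L_{j-1}^\perp}=\sum_{k=\alpha_j}^{\alpha_{j+1}-1} b_k u_k-w$ with $b_k\geq 0$ and $w\in L_{j-1}=X_{j-1}$; since both $-w$ and $v^{(0)}$ lie in $X_{j-1}$ and are therefore nonnegative combinations of $u_1,\ldots,u_{\alpha_j-1}$, one exhibits $v$ as a nonnegative combination of $u_1,\ldots,u_{\alpha_{j+1}-1}$, so $v\in X_j$. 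The induction terminates with $L_q=\textmd{lin}\{u_1,\ldots,u_N\}=\mathbb{R}^n$ because $u_1,\ldots,u_N$ are not in a closed hemisphere.

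The delicate step is the limit after multiplication by $h(P_m,u_{\alpha_j})$ in the inductive argument: without first projecting onto $L_{j-1}^\perp$, the contributions from blocks $0,\ldots,j-1$ would blow up, and the projection is the essential device that makes those terms vanish identically and isolates only the block-$j$ contribution. The second subtle ingredient is the lift of positive spanning from $L_{j-1}^\perp\cap L_j$ back to $L_j$ by the original $u_k$'s, where the inductive hypothesis $X_{j-1}=L_{j-1}$ is precisely what allows the $L_{j-1}$-residue $-w$ to be rewritten as a nonnegative combination of $u_1,\ldots,u_{\alpha_j-1}$.
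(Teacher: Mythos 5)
Your argument is correct, and it reaches the same conclusion by a genuinely different route than the paper's.

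The paper's proof, after extracting the indices $\alpha_j$ exactly as you do, proceeds geometrically: it introduces the polar cone $\Sigma_j^{*}=\{v : v\cdot u_i\leq 0,\ i=1,\ldots,\alpha_{j+1}-1\}$, uses the stationarity identity of Lemma~\ref{Lemma 3.3} together with the ratio limits (4.0a)--(4.0b) to show that every $v\in\Sigma_j^{*}\cap S^{n-1}$ is orthogonal to all of $u_1,\ldots,u_{\alpha_{j+1}-1}$, hence $\Sigma_j^{*}\cap\textmd{lin}\{u_1,\ldots,u_{\alpha_{j+1}-1}\}=\{0\}$, deduces that the $u_i$'s are not contained in a closed hemisphere of their linear span, and only then invokes Lemma~\ref{prep43} (the Carath\'eodory-type positive-spanning lemma) to conclude that $X_j=\textmd{pos}\{u_1,\ldots,u_{\alpha_{j+1}-1}\}$ equals $\textmd{lin}\{u_1,\ldots,u_{\alpha_{j+1}-1}\}$; the strict growth of the dimensions is then checked by a separate contradiction argument that again passes through Lemma~\ref{prep43}. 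Your proof, in contrast, manufactures the positive spanning relation explicitly: starting from $\sum_k\gamma_k u_k/h(P_m,u_k)=0$, you project onto $L_{j-1}^\perp$, multiply by $h(P_m,u_{\alpha_j})$, and pass to the limit; the ratio limits (4.0a)--(4.0b) then kill the high-index block, and what survives is the concrete positive dependence $\sum_{k=\alpha_j}^{\alpha_{j+1}-1}(\gamma_k/t_{k,j})\,u_k|_{L_{j-1}^\perp}=0$, from which positive spanning of $L_{j-1}^\perp\cap L_j$ is immediate; the lift back to $L_j$ via the induction hypothesis $X_{j-1}=L_{j-1}$, and the observation $u_k\notin L_{j-1}$ for $\alpha_j\leq k<\alpha_{j+1}$ (giving the strict dimension growth), close the induction. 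Both proofs pivot on the same two tools (the criticality identity from Lemma~\ref{Lemma 3.3} and the scale-separation limits (4.0a)--(4.0b)); yours exhibits the positive combination directly and avoids any appeal to Lemma~\ref{prep43} in this step, while the paper's is more geometric in flavor and structured around polar cones rather than an induction on $j$. One small point worth making explicit in a write-up: when you divide out by $h(P_m,u_{\alpha_j})$ and pass to the limit, you should note that $t_{k,j}\geq 1>0$ (from the monotone ordering of the $h(P_m,u_i)$'s) so that the resulting coefficients $\gamma_k/t_{k,j}$ are indeed finite and strictly positive.
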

\begin{proof}
By the conditions that $\lim_{m\rightarrow\infty}d(P_{m})=\infty$, $V(K)=1$ and $h(P_{m},u_{1})\leq h(P_{m},u_{2})\leq...\leq h(P_{m},u_{N})$ for all $m\in\mathbb{N}$, we have,
$$
\lim_{m\rightarrow\infty}h(P_{m},u_{1})=0\textmd{ and }\lim_{m\rightarrow\infty}h(P_{m},u_{N})=\infty.
$$
From Lemma~\ref{prep42}, we may assume that there exist $q\geq1$, and
$$
1=\alpha_{0}<\alpha_{1}<...<\alpha_{q}\leq N<N+1=\alpha_{q+1}
$$
that satisfy Equations (\ref{Equation 4.0a}) and (\ref{Equation 4.0b}).

For $j=0,...,q-1$, we consider the cone
$$
\Sigma_{j}=\textmd{pos}\{u_{1},...,u_{\alpha_{j+1}-1}\},
$$
and its negative polar
$$
\Sigma_{j}^{*}=\{v\in\mathbb{R}^{n}: v\cdot u_{i}\leq0\textmd{ for all }i=1,...,\alpha_{j+1}-1\}.
$$

Let $0\leq j\leq q-1$, $1\leq p\leq\alpha_{j+1}-1$ and $v\in\Sigma_{j}^{*}\cap S^{n-1}$. From the condition that $\xi(P_{m})$ is the origin and Lemma~\ref{Lemma 3.3},
$$
\sum_{i=1}^{N}\frac{\gamma_{i}(v\cdot u_{i})}{h(P_{m},u_{i})}=0.
$$
By this and the fact that $v\in\Sigma_{j}^{*}\cap S^{n-1}$,
\begin{equation*}
\begin{split}
0&\geq\gamma_{p}(v\cdot u_{p})=-\sum_{i\neq p}\frac{h(P_{m},u_{p})}{h(P_{m},u_{i})}\gamma_{i}(v\cdot u_{i})\\
&\geq-\sum_{i\geq\alpha_{j+1}}\frac{h(P_{m},u_{p})}{h(P_{m},u_{i})}\gamma_{i}(v\cdot u_{i})\\
&\geq-\sum_{i\geq\alpha_{j+1}}\frac{h(P_{m},u_{p})}{h(P_{m},u_{i})}\gamma_{i}.
\end{split}
\end{equation*}
By this, (4.0a) and (4.0b), we have, $\gamma_{p}(v\cdot u_{p})$ is no bigger than 0, and no less than any negative number. Thus,
$$
v\cdot u_{p}=0
$$
for all $p=1,...,\alpha_{j+1}-1$ and $v\in\Sigma_{j}^{*}\cap S^{n-1}$. Then, for any $u\in\textmd{lin}\{u_{1},...,u_{\alpha_{j+1}-1}\}$ and $v\in\Sigma_{j}^{*}$, $u\cdot v=0$. Hence,
$$
\Sigma_{j}^{*}\cap\textmd{lin}\{u_{1},...,u_{\alpha_{j+1}-1}\}=\{0\}.
$$

We claim that $\{u_{1},...,u_{\alpha_{j+1}-1}\}$ is not concentrated on a closed hemisphere of $S^{n-1}\cap\textmd{lin}\{u_{1},...,u_{\alpha_{j+1}-1}\}$. Otherwise, there exists a vector $u_{0}\in\textmd{lin}\{u_{1},...,u_{\alpha_{j+1}-1}\}$ such that $u_{0}\neq0$ and $u_{0}\cdot u_{p}\leq0$ for all $p=1,...,\alpha_{j+1}-1$. This contradicts the fact that $\Sigma_{j}^{*}\cap\textmd{lin}\{u_{1},...,u_{\alpha_{j+1}-1}\}=\{0\}.$ Hence, $\{u_{1},...,u_{\alpha_{j+1}-1}\}$ is not concentrated on a closed hemisphere of $S^{n-1}\cap\textmd{lin}\{u_{1},...,u_{\alpha_{j+1}-1}\}$. By Lemma~\ref{prep43},
$$
\textmd{lin}\{u_{1},...,u_{\alpha_{j+1}-1}\}=\textmd{pos}\{u_{1},...,u_{\alpha_{j+1}-1}\}.
$$

Let $X_{j}=\textmd{pos}\{u_{1},...,u_{\alpha_{j+1}-1}\}$, $d_{j}=\dim X_{j}$ for $j=0,...,q$, and $d_{-1}=0$. Obviously, $d_{0}\geq1$ and $d_{q}=n$. We claim that $d_{0}<d_{1}<...<d_{q}$. Otherwise, there exist $0\leq k<l\leq q$ such that $d_{k}=d_{l}$, and thus $X_{k}=X_{l}$. We write $\lambda>0$ for the constant of Lemma~\ref{prep43} depending on 
$u_1,\ldots,u_N$. 
By Lemma~\ref{prep43}, there exist $u_{i_{1}},...,u_{i_{d_{k}}}\in\{u_{1},...,u_{\alpha_{k+1}-1}\}$ and $0\leq a_{i_{1}},...,a_{i_{d_{k}}}\leq\lambda$ such that
$$
u_{\alpha_{l}}=a_{i_{1}}u_{i_{1}}+...+a_{i_{d_{k}}}u_{i_{d_{k}}}.
$$
Hence,
\begin{equation*}
\begin{split}
h(P_{m},u_{\alpha_{l}})&=h(P_{m},a_{i_{1}}u_{i_{1}}+...+a_{i_{d_{k}}}u_{i_{d_{k}}})\\
&\leq a_{i_{1}}h(P_{m},u_{i_{1}})+...+a_{i_{d_{k}}}h(P_{m},u_{i_{d_{k}}}),
\end{split}
\end{equation*}
for all $m\in\mathbb{N}$. But this contradicts (4.0a) and (4.0b). Therefore,
$$
1\leq d_{0}<d_{1}<...<d_{q}=n.
$$
\end{proof}

\begin{lemma}\label{Lemma 4.2}
 Suppose $\mu$ is a discrete measure on $S^{n-1}$ that is not concentrated on any closed hemisphere of $S^{n-1}$, and satisfies the strict essential subspace concentration inequality. If $P_{m}$ is a sequence of polytopes with $V(P_{m})=1$, $\xi(P_{m})$ is the origin, the set of outer unit normals of $P_{m}$ is a subset of the support of $\mu$ and $\lim_{m\rightarrow\infty}d(P_{m})=\infty$, then
 $$
 \int_{S^{n-1}}\log h(P_{m},u)d\mu(u)
 $$
 is not bounded from above.
\end{lemma}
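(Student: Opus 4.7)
The plan is to combine Lemma~\ref{prep44} with the strict essential subspace concentration inequality through an Abel summation argument that generalizes the planar sketch in the introduction. Since the statement is equivalent to showing that any subsequence of $\{P_m\}$ has a further subsequence along which $\int_{S^{n-1}}\log h(P_m,u)\,d\mu(u)\to\infty$, I may pass to subsequences freely. After relabeling via Lemma~\ref{prep41} I may assume $h(P_m,u_1)\leq\ldots\leq h(P_m,u_N)$, and Lemma~\ref{prep44} then furnishes $q\geq 1$, indices $1=\alpha_0<\alpha_1<\ldots<\alpha_q\leq N<\alpha_{q+1}=N+1$, and a strictly increasing chain of subspaces $X_j=\mathrm{lin}\{u_1,\ldots,u_{\alpha_{j+1}-1}\}$ of dimensions $1\leq d_0<d_1<\ldots<d_q=n$. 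Writing $H_j^{(m)}=h(P_m,u_{\alpha_j})$, (4.0a)--(4.0b) yield $H_{j+1}^{(m)}/H_j^{(m)}\to\infty$ for each $0\leq j\leq q-1$ and $h(P_m,u_k)/H_j^{(m)}\to t_{kj}>0$ for $\alpha_j\leq k<\alpha_{j+1}$.

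The first step is to identify $X_0,\ldots,X_{q-1}$ as essential subspaces with respect to $\mu$. The proof of Lemma~\ref{prep44} already shows that $\{u_1,\ldots,u_{\alpha_{j+1}-1}\}$ is not concentrated on any closed hemisphere of $X_j\cap S^{n-1}$, so it suffices to verify the equality $\mathrm{supp}(\mu)\cap X_j=\{u_1,\ldots,u_{\alpha_{j+1}-1}\}$. If instead $u_k\in X_j$ for some $k\geq\alpha_{j+1}$, then Lemma~\ref{prep43} writes $u_k$ as a nonnegative combination of $u_1,\ldots,u_{\alpha_{j+1}-1}$ with coefficients bounded by some $\lambda=\lambda(X_j)$, forcing $h(P_m,u_k)/H_j^{(m)}$ to stay bounded; but $h(P_m,u_k)\geq h(P_m,u_{\alpha_{j+1}})$ together with (4.0a) gives $h(P_m,u_k)/H_j^{(m)}\to\infty$, a contradiction. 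Hence $X_j$ is essential, and writing $S_j:=\mu(X_j\cap S^{n-1})=\sum_{k=1}^{\alpha_{j+1}-1}\gamma_k$, the strict essential subspace concentration inequality yields
\begin{equation*}
S_j<\tfrac{d_j}{n}|\mu|\quad(0\leq j\leq q-1),\qquad S_q=|\mu|,\qquad d_q=n.
\end{equation*}

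The second step is to convert $V(P_m)=1$ into an inequality on the heights $H_j^{(m)}$. Setting $Y_j=X_j\cap X_{j-1}^\perp$ (with $X_{-1}=\{0\}$), the subspaces $Y_0,\ldots,Y_q$ are pairwise orthogonal with $\dim Y_j=d_j-d_{j-1}$ and $\mathbb{R}^n=Y_0\oplus\ldots\oplus Y_q$, so $P_m\subset P_m|_{Y_0}+\ldots+P_m|_{Y_q}$, an orthogonal Minkowski sum whose volume equals the product of the $V_{d_j-d_{j-1}}(P_m|_{Y_j})$. For $w\in Y_j\cap S^{n-1}\subset X_j\cap S^{n-1}$, Lemma~\ref{prep43} expresses $w$ as a bounded-coefficient nonnegative combination of some $u_1,\ldots,u_{\alpha_{j+1}-1}$, and by (4.0b) these support-function values are uniformly $O(H_j^{(m)})$; therefore $h(P_m|_{Y_j},w)=h(P_m,w)\leq C_jH_j^{(m)}$, which gives
\begin{equation*}
1=V(P_m)\leq\prod_{j=0}^{q}V_{d_j-d_{j-1}}(P_m|_{Y_j})\leq c\prod_{j=0}^{q}(H_j^{(m)})^{d_j-d_{j-1}}.
\end{equation*}

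Finally, set $a_j=\log H_j^{(m)}$, $\nu_j=S_j-S_{j-1}$, and $b_j=a_{j+1}-a_j$, so $b_j\to\infty$ by (4.0a). Taking logarithms and Abel-summing, the volume bound becomes $na_q-\sum_{j=0}^{q-1}d_jb_j\geq-\log c$, while the same Abel identity applied to the integral gives $\int_{S^{n-1}}\log h(P_m,u)\,d\mu(u)=\sum_{j=0}^q\nu_ja_j+O(1)=|\mu|a_q-\sum_{j=0}^{q-1}S_jb_j+O(1)$, where the $O(1)$ absorbs the convergent quantities $\log(h(P_m,u_k)/H_j^{(m)})\to\log t_{kj}$. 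Eliminating $a_q$ between the two identities yields
\begin{equation*}
\int_{S^{n-1}}\log h(P_m,u)\,d\mu(u)\geq-\tfrac{|\mu|}{n}\log c+\sum_{j=0}^{q-1}\Bigl(\tfrac{d_j|\mu|}{n}-S_j\Bigr)b_j+O(1),
\end{equation*}
and every coefficient of $b_j$ is strictly positive by the first step, so the right-hand side tends to $+\infty$. The main obstacle is the clean identification $\mathrm{supp}(\mu)\cap X_j=\{u_1,\ldots,u_{\alpha_{j+1}-1}\}$, because without it one cannot assert the essentiality of $X_j$ nor correctly evaluate $\mu(X_j\cap S^{n-1})$; once this identification is in place, the strict inequality in the essential subspace concentration condition is precisely what makes the divergent quantities $b_j$ enter with positive coefficients.
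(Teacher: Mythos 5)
Your proposal is correct and follows essentially the same route as the paper: pass to a subsequence via Lemma~\ref{prep41}, invoke Lemma~\ref{prep44} for the chain $X_0\subsetneq\ldots\subsetneq X_q=\mathbb{R}^n$ of subspaces, project onto the orthogonal complements $\widetilde X_j=X_j\cap X_{j-1}^\perp$ to bound $1=V(P_m)$ by a product of powers of the $h(P_m,u_{\alpha_j})$, then Abel-sum against the $\gamma_i$'s and use the strict inequalities $\mu(X_j\cap S^{n-1})<\tfrac{d_j}{n}|\mu|$ to conclude divergence. Two small remarks: the paper bounds the sum from below using the monotonicity $h(P_m,u_k)\geq h(P_m,u_{\alpha_j})$ within each block rather than tracking the $\log t_{kj}$ corrections as $O(1)$, which is marginally cleaner; and your identification $\mathrm{supp}(\mu)\cap X_j=\{u_1,\ldots,u_{\alpha_{j+1}-1}\}$, while true (and a nice explicit point the paper uses implicitly when writing $\beta_j=\sum_{i<\alpha_{j+1}}\gamma_i$), is not strictly needed for the argument, since essentiality of $X_j$ already follows from the forward inclusion together with the non-hemisphere property established in Lemma~\ref{prep44}, and then $\sum_{i<\alpha_{j+1}}\gamma_i\leq\mu(X_j\cap S^{n-1})<\tfrac{d_j}{n}|\mu|$ suffices.
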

\begin{proof}
Without loss of generality, we can suppose $|\mu|=1$. Let supp$(\mu)=\{u_{1},...,u_{N}\}$, and $\mu(\{u_{i}\})=\gamma_{i}, i=1,...,N$. From Lemma~\ref{prep41}, we may assume that
\begin{equation}\tag{4.1}\label{Equation 4.1}
h(P_{m},u_{1})\leq...\leq h(P_{m},u_{N}),
\end{equation}
for all $m\in\mathbb{N}$.
Since $\lim_{m\rightarrow\infty}d(P_{m})=\infty$ and $V(K)=1$,
$$
\lim_{m\rightarrow\infty}h(P_{m},u_{1})=0\textmd{ and }\lim_{m\rightarrow\infty}h(P_{m},u_{N})=\infty.
$$
By Lemma~\ref{prep44}, there exist $q\geq1$, and
$$
1=\alpha_{0}<\alpha_{1}<...<\alpha_{q}\leq N<N+1=\alpha_{q+1}
$$
such that if $j=1,...,q$, then
\begin{equation}\tag{4.2a}\label{Equation 4.2}
\lim_{m\rightarrow\infty}\frac{h(P_{m},u_{\alpha_{j}})}{h(P_{m},u_{\alpha_{j-1}})}=\infty,
\end{equation}
and if $j=0,...,q$ and $\alpha_{j}\leq k\leq\alpha_{j+1}-1$, then
\begin{equation}\tag{4.2b}\label{Equation 4.3}
\lim_{m\rightarrow\infty}\frac{h(P_{m},u_{k})}{h(P_{m},u_{\alpha_{j}})}=t_{k,j}<\infty.
\end{equation}
Moreover, $X_{j}=\textmd{pos}\{u_{1},...,u_{\alpha_{j+1}-1}\}$ are subspaces of $\mathbb{R}^{n}$ with respect to $\mu$ for all $0\leq j\leq q$ with
$$
1\leq d_{0}<d_{1}<...<d_{q}=n,
$$
where $d_{j}=\dim(X_{j})$. In particular, $X_0,\ldots,X_{q-1}$ are essential subspaces.

Let $\widetilde{X}_{0}=X_{0}$, and if $j=1,...,q$, then let
$$
\widetilde{X}_{j}=X_{j-1}^{\bot}\cap X_{j}.
$$
From the definition of $X_{j}$ and $\widetilde{X}_{j}$, we have, $\tilde{X}_{j_{1}}\perp\tilde{X}_{j_{2}}$ for $j_{1}\neq j_{2}$, $\dim\widetilde{X}_{j}=d_{j}-d_{j-1}>0$ for $j=0,...,q$, and $\mathbb{R}^{n}$ is a direct sum of $\tilde{X}_{0},...,\tilde{X}_{q}$.

Let $\lambda>0$ be the constant of  Lemma~\ref{prep43} for $u_1,\ldots,u_N$.
Suppose $0\leq j\leq q$ and $u\in X_{j}\cap S^{n-1}$. By Lemma~\ref{prep43}, there exists a subset, $\{u_{i_{1}},...,u_{i_{d_{j}}}\}$, of $\{u_{1},...,u_{\alpha_{j+1}-1}\}$ and $0\leq a_{i_{1}},...,a_{i_{d_{j}}}\leq\lambda$ such that
$$
u=a_{i_{1}}u_{i_{1}}+...+a_{i_{d_{j}}}u_{i_{d_{j}}}.
$$
Then,
\begin{equation*}
\begin{split}
h(P_{m},u)&=h(P_{m},a_{i_{1}}u_{i_{1}}+...+a_{i_{d_{j}}}u_{i_{d_{j}}})\\
&\leq a_{i_{1}}h(P_{m},u_{i_{1}})+...+a_{i_{d_{j}}}h(P_{m},u_{i_{d_{j}}}).
\end{split}
\end{equation*}
By this, (4.2a) and (4.2b), if $m$ is large, then
$$
h(P_{m},u)\leq t_{j}h(P_{m},u_{\alpha_{j}})\textmd{ for all }u\in X_{j}\cap S^{n-1}
$$
where $t_{j}=d_j\lambda (t_{\alpha_{j+1}-1,j}+1)>0$.
Hence, for $j=0,...,q$,
$$
P_{m}|_{\tilde{X}_{j}}\subset t_{j}h(P_{m},u_{\alpha_{j}})\big(B^{n}\cap\tilde{X}_{j}\big).
$$
By this and the fact that $\mathbb{R}^{n}$ is a direct sum of $\tilde{X}_{0},...,\tilde{X}_{q}$,
$$
P_{m}\subset\sum_{j=0}^{q}t_{j}h(P_{m},u_{\alpha_{j}})\big(B^{n}\cap\tilde{X}_{j}\big),
$$
where the summation is Minkowski sum.
Let
$$
\omega=\max_{0\leq j\leq q}t_{j}\kappa_{d_{j}-d_{j-1}}^{\frac{1}{d_{j}-d_{j-1}}},
$$
where $\kappa_{d_{j}-d_{j-1}}$ is the volume of the $(d_{j}-d_{j-1})$-dimensional unit ball. Then, for $j=0,...,q$
$$
V_{d_{j}-d_{j-1}}\left(t_{j}h(P_{m},u_{\alpha_{j}})\big(B^{n}\cap\tilde{X}_{j}\big)\right)\leq(\omega h(P_{m},u_{\alpha_{j}}))^{d_{j}-d_{j-1}}.
$$
From this, the fact that $\mathbb{R}^{n}$ is a direct sum of $\tilde{X}_{0},...,\tilde{X}_{q}$, and Fubini's formula, we have
\begin{equation*}
\begin{split}
1&=V(P_{m})\\
&\leq V\bigg(\sum_{j=0}^{q}t_{j}h(P_{m},u_{\alpha_{j}})\big(B^{n}\cap\tilde{X}_{j}\big)\bigg)\\
&=\prod_{j=0}^{q}V_{d_{j}-d_{j-1}}\left(t_{j}h(P_{m},u_{\alpha_{j}})\big(B^{n}\cap\tilde{X}_{j}\big)\right)\\
&\leq\prod_{j=0}^{q}(\omega h(P_{m},u_{\alpha_{j}}))^{d_{j}-d_{j-1}}.
\end{split}
\end{equation*}
It follows from $0=d_{-1}<d_{0}<...<d_{q}=n$ that if $m$ is large, then
$$
\sum_{j=0}^{q}\left(\frac{d_{j}}{n}-\frac{d_{j-1}}{n}\right)\log h(P_{m},u_{\alpha_{j}})\geq-\log\omega.
$$
We rewrite the last inequality as
\begin{equation}\tag{4.3}
\log h(P_{m},u_{\alpha_{q}})\geq-\sum_{j=0}^{q-1}\frac{d_{j}}{n}\log\frac{h(P_{m},u_{\alpha_{j}})}{h(P_{m},u_{\alpha_{j+1}})}-\log\omega.
\end{equation}

For $j=0,...,q$, we set $\beta_{j}=\mu(X_{j}\cap S^{n-1})=\sum_{i=1}^{\alpha_{j+1}-1}\gamma_{i}$, and $\beta_{-1}=0$. We deduce from the facts that $X_{j}$ is an essential subspace with $d_{j}=\dim(X_{j})$, and from the condition that $\mu$ satisfies the strict essential subspace concentration condition that
\begin{equation}\tag{4.4}\label{Equation 4.4}
\beta_{j}<\frac{d_{j}}{n} \mbox{ \ \  for $0\leq j\leq q-1.$}
\end{equation}

By the fact that $h(P_{m},u_{1})\leq h(P_{m},u_{2})\leq...\leq h(P_{m},u_{N})$, the fact that $\beta_{q}=1$ and (4.3),
\begin{equation*}
\begin{split}
\sum_{i=1}^{N}\gamma_{i}\log h(P_{m},u_{i})&=\sum_{i=1}^{\alpha_{1}-1}\gamma_{i}\log h(P_{m},u_{i})+\sum_{i=\alpha_{1}}^{\alpha_{2}-1}\gamma_{i}\log h(P_{m},u_{i})+...+\sum_{i=\alpha_{q}}^{N}\gamma_{i}\log h(P_{m},u_{i})\\
&\geq\sum_{i=1}^{\alpha_{1}-1}\gamma_{i}\log h(P_{m},u_{\alpha_{0}})+\sum_{i=\alpha_{1}}^{\alpha_{2}-1}\gamma_{i}\log h(P_{m},u_{\alpha_{1}})+...+\sum_{i=\alpha_{q}}^{N}\gamma_{i}\log h(P_{m},u_{\alpha_{q}})\\
&=\sum_{j=0}^{q}(\beta_{j}-\beta_{j-1})\log h(P_{m},u_{\alpha_{j}})\\
&=\log h(P_{m},u_{\alpha_{q}})+\sum_{j=0}^{q-1}\beta_{j}\log\frac{h(P_{m},u_{\alpha_{j}})}{h(P_{m},u_{\alpha_{j+1}})}\\
&\geq-\log\omega+\sum_{j=0}^{q-1}\left(\beta_{j}-\frac{d_{j}}{n}\right)\log\frac{h(P_{m},u_{\alpha_{j}})}{h(P_{m},u_{\alpha_{j+1}})}.
\end{split}
\end{equation*}
It follows from (\ref{Equation 4.1}), (4.2a), (\ref{Equation 4.4}) that for $j=0,...,q-1$,
$$
\lim_{m\rightarrow\infty}\left(\beta_{j}-\frac{d_{j}}{n}\right)\log\frac{h(P_{m},u_{\alpha_{j}})}{h(P_{m},u_{\alpha_{j+1}})}=\infty.
$$
Therefore,
$$
\lim_{m\rightarrow\infty}\sum_{i=1}^{N}\gamma_{i}\log h(P_{m},u_{i})=\infty.
$$
\end{proof}

The following lemma will be needed (see, \cite{Z4}, Lemma 3.5).

\begin{lemma}\label{Lemma 4.3}
If $P$ is a polytope in $\mathbb{R}^{n}$ and $v_{0}\in S^{n-1}$ with $V_{n-1}(F(P,v_{0}))=0$, then there exists a $\delta_{0}>0$ such that for $0\leq\delta<\delta_{0}$
$$
V(P\cap\{x: x\cdot v_{0}\geq h(P,v_{0})-\delta\})=c_{n}\delta^{n}+...+c_{2}\delta^{2},
$$
where $c_{n},...,c_{2}$ are constants that depend on $P$ and $v_{0}$.
\end{lemma}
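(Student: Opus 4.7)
The plan is to show that the volume $V(P_\delta)$, where $P_\delta = P\cap\{x: x\cdot v_0 \geq h(P,v_0)-\delta\}$, is a polynomial in $\delta$ of degree at most $n$ on some interval $[0,\delta_0)$, and then verify that the coefficients of $\delta^0$ and $\delta^1$ both vanish under the hypothesis $V_{n-1}(F(P,v_0))=0$.

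First I would establish combinatorial stability: for a sufficiently small threshold $\delta_0 > 0$, the polytope $P_\delta$ has the same combinatorial type for all $\delta \in (0,\delta_0)$. The vertices of $P_\delta$ are of two kinds: vertices of $P$ that lie in $F := F(P,v_0)$, and points where edges of $P$ meet the hyperplane $H_\delta = \{x: x\cdot v_0 = h(P,v_0)-\delta\}$. The relevant edges are exactly those incident to vertices of $F$ but not lying in $F$ itself, and for $\delta < \delta_0$ (with $\delta_0$ determined by the edge lengths and the slopes $(w - w')\cdot v_0$ over such edges) no other edge of $P$ is crossed. Each cut vertex then moves affinely in $\delta$, so all vertices of $P_\delta$ are affine functions of $\delta$ on $[0,\delta_0)$ with a fixed face lattice.

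With the combinatorial structure fixed and vertices affine in $\delta$, the volume is a polynomial of degree at most $n$ in $\delta$ on $[0,\delta_0)$. A clean way to see both this and the vanishing of the first two coefficients is via Fubini: write
$$
V(P_\delta) = \int_0^\delta f(t)\,dt, \qquad f(t) := V_{n-1}(P\cap H_t).
$$
The same combinatorial stability argument applied to the slices $P \cap H_t$ shows that $f$ is a polynomial of degree at most $n-1$ on $[0,\delta_0)$. Hence $V(P_\delta)$ is polynomial of degree at most $n$ with zero constant term (since $V(P_0) = V_n(F) = 0$), and its coefficient of $\delta$ equals $f(0) = V_{n-1}(F(P,v_0)) = 0$ by the hypothesis. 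This yields exactly $V(P_\delta) = c_n\delta^n + \cdots + c_2\delta^2$.

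The only delicate point is the combinatorial stability statement in the first step, i.e.\ producing an effective $\delta_0$ below which no new edge of $P$ gets cut by $H_\delta$ and no vertex of $P$ outside $F$ crosses $H_\delta$. This is a standard feature of polyhedral slicing and follows by taking $\delta_0$ smaller than $h(P,v_0) - w\cdot v_0$ for every vertex $w$ of $P$ not in $F$; I do not expect any subtlety beyond checking this uniform separation.
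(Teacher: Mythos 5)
The paper does not actually prove this lemma; it quotes it from Zhu's paper \cite{Z4} (Lemma 3.5), so there is no in-text argument to compare against. That said, your argument is correct and is the natural one: fix the combinatorics of the cut for $\delta$ below the vertical gap $\min\{h(P,v_0)-w\cdot v_0 : w \text{ a vertex of } P,\ w\notin F\}$, observe that the vertices of the slab (respectively of the slice $P\cap H_t$) are affine in $\delta$ (respectively $t$), conclude by triangulation that the slice area $f(t)=V_{n-1}(P\cap H_t)$ is a polynomial of degree at most $n-1$ on $(0,\delta_0)$, integrate, and kill the $\delta^0$ and $\delta^1$ coefficients using $V(P_0)=0$ and $f(0^+)=V_{n-1}(F(P,v_0))=0$.

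One small point worth tightening: the face lattice of $P_\delta$ (and of the slice) is constant only on the open interval $(0,\delta_0)$; at $\delta=0$ the cut vertices all collapse into $F(P,v_0)$, so the combinatorics degenerates. This does not affect your conclusion because you only need the polynomial identity on $(0,\delta_0)$ together with the one-sided limits $V(P_\delta)\to 0$ and $f(t)\to V_{n-1}(F(P,v_0))=0$ as $\delta,t\to 0^+$ (the latter following from $P\cap H_t$ shrinking in Hausdorff distance to the lower-dimensional set $F(P,v_0)$), which pin down the two lowest coefficients of the extended polynomial. Stating the stability on the open interval and handling $t=0$ by a limit would make the write-up airtight.
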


Now, we have prepared enough to prove the main result of this section.

\begin{lemma}\label{Lemma 4.4}
Suppose the discrete measure $\mu=\sum_{k=1}^{N}\gamma_{k}\delta_{u_{i}}$ is not concentrated on a closed hemisphere. If $\mu$ satisfies the strict essential subspace concentration inequality, then there exists a
$P\in\mathcal{P}_{N}(u_{1},...,u_{N})$ such that $\xi(P)=0$,
$V(P)=|\mu|$ and
$$
\Phi_{P}(0)=\inf\left\{\max_{\xi\in\Int (Q)}\Phi_{Q}(\xi):
Q\in\mathcal{P}(u_{1},...,u_{N})\textmd{ and
}V(Q)=|\mu|\right\},
$$
where $\Phi_{Q}(\xi)=\int_{S^{n-1}}\log\left(h(Q,u)-\xi\cdot u\right)d\mu(u)$.
\end{lemma}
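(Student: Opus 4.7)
The plan is a standard compactness plus local-perturbation argument. Take a minimizing sequence $Q_m \in \mathcal{P}(u_1,\ldots,u_N)$ with $V(Q_m)=|\mu|$ and $\max_\xi\Phi_{Q_m}(\xi)\to\inf$. Using the translation invariance $\Phi_{Q-x}(\xi-x)=\Phi_Q(\xi)$ observed in the proof of Lemma~\ref{Lemma 3.3}, translate each $Q_m$ so that $\xi(Q_m)=0$; this leaves $\max_\xi\Phi_{Q_m}(\xi)=\Phi_{Q_m}(0)$ unchanged. The infimum is finite, since any single polytope in $\mathcal{P}(u_1,\ldots,u_N)$ of volume $|\mu|$ with the origin in its interior provides a finite witness.

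The first main step is to extract a limit. I claim the sequence $\{Q_m\}$ has bounded diameter: otherwise, passing to a subsequence, $d(Q_m)\to\infty$, and Lemma~\ref{Lemma 4.2} (which is exactly where the strict essential subspace concentration inequality enters) forces $\Phi_{Q_m}(0)=\int\log h(Q_m,u)\,d\mu(u)\to\infty$, contradicting that $\Phi_{Q_m}(0)$ approaches a finite infimum. Blaschke's selection theorem then yields a Hausdorff-convergent subsequence with limit $P$; volume continuity gives $V(P)=|\mu|$, so $P$ is a convex body, and since $\mathcal{P}(u_1,\ldots,u_N)$ is closed under Hausdorff limits (noted right after Lemma~\ref{Lemma 3.1}) we have $P\in\mathcal{P}(u_1,\ldots,u_N)$. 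Lemma~\ref{Lemma 3.2} then gives $\xi(P)=\lim\xi(Q_m)=0$ and $\Phi_P(0)=\lim\Phi_{Q_m}(0)=\inf$.

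It remains to upgrade $P\in\mathcal{P}(u_1,\ldots,u_N)$ to $P\in\mathcal{P}_N(u_1,\ldots,u_N)$, which I expect to be the main obstacle. Suppose for contradiction that some $u_{i_0}$ is not the outer normal of a facet of $P$, so $V_{n-1}(F(P,u_{i_0}))=0$. Push the $u_{i_0}$-halfspace inward: define
\[
P_t=\Bigl(\bigcap_{i\neq i_0}\{x:x\cdot u_i\leq h(P,u_i)\}\Bigr)\cap\{x:x\cdot u_{i_0}\leq h(P,u_{i_0})-t\},
\]
and $\widetilde P_t=\lambda(t)P_t$ with $\lambda(t)=(|\mu|/V(P_t))^{1/n}$, so $\widetilde P_t\in\mathcal{P}(u_1,\ldots,u_N)$ and $V(\widetilde P_t)=|\mu|$ for small $t\geq 0$. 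Lemma~\ref{Lemma 4.3} gives $V(P)-V(P_t)=O(t^2)$, hence $\lambda$ is $C^1$ with $\lambda'(0)=0$. Running the implicit function argument of Lemma~\ref{Lemma 3.4}, with perturbation data $\tau_i=0$ for $i\neq i_0$ and $\tau_{i_0}=-1$, makes $\xi(t):=\xi(\widetilde P_t)$ differentiable at $0$ with $\xi(0)=0$, and a direct computation using Lemma~\ref{Lemma 3.3} gives
\[
\frac{d}{dt}\bigg|_{t=0}\Phi_{\widetilde P_t}(\xi(t))=\sum_{k=1}^N\gamma_k\frac{\tau_k}{h(P,u_k)}-\xi'(0)\cdot\sum_{k=1}^N\gamma_k\frac{u_k}{h(P,u_k)}=-\frac{\gamma_{i_0}}{h(P,u_{i_0})}<0.
\]
Thus $\Phi_{\widetilde P_t}(\xi(\widetilde P_t))<\Phi_P(0)=\inf$ for small $t>0$, contradicting the definition of the infimum, and hence $P\in\mathcal{P}_N(u_1,\ldots,u_N)$.

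The delicate point in the last paragraph is the $O(t^2)$ control from Lemma~\ref{Lemma 4.3}: it is precisely what forces $\lambda'(0)=0$, which in turn is what makes the first-order change in $\Phi_{\widetilde P_t}(\xi(\widetilde P_t))$ reduce to a single negative term coming from the vanishing support number at $u_{i_0}$. Without the quadratic vanishing one would only get a correction of order $|F(P,u_{i_0})|$, and the sign of the variation would no longer be forced, so Lemma~\ref{Lemma 4.3} is genuinely doing the work of ruling out ``missing'' facets in the limit.
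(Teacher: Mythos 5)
Your compactness half — minimizing sequence normalized so that $\xi(Q_m)=0$, boundedness from Lemma~\ref{Lemma 4.2}, Blaschke selection and Lemma~\ref{Lemma 3.2} to produce $P\in\mathcal{P}(u_1,\ldots,u_N)$ with $\xi(P)=0$, $V(P)=|\mu|$, and $\Phi_P(0)$ equal to the infimum — matches the paper's proof of Lemma~\ref{Lemma 4.4} exactly.

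The step that rules out missing facets is where you diverge from the paper, and it is where there is a genuine gap. You invoke the implicit function argument of Lemma~\ref{Lemma 3.4} to obtain differentiability of $\xi(t)$ at $t=0$. But that argument is set up under the hypothesis $P\in\mathcal{P}_N(u_1,\ldots,u_N)$, which ensures both that $h(P_t,u_k)=h(P,u_k)+t\tau_k$ holds for all $k$ and small $|t|$, and that the map $F(t,\xi)$ is $C^1$ in a two-sided neighbourhood of $(0,\ldots,0)$. Your contradiction hypothesis is precisely that $P\notin\mathcal{P}_N$. Two problems follow. First, the choice $\tau_i=0$ for $i\neq i_0$ does not automatically describe the truncated polytope: if some other non-facet direction $u_j$ has $F(P,u_j)\subset H(P,u_{i_0})$, then $h(P_t,u_j)$ drops strictly for $t>0$, so the effective perturbation there is negative, not zero. (This pushes your derivative in the same direction, so the sign survives, but the displayed identity is then false as stated.) Second, and more seriously, $t\mapsto h(P_t,u_k)$ is in general only piecewise affine with a possible kink at $t=0$, so $F(t,\xi)$ need not be $C^1$ across $t=0$ and the two-sided implicit function theorem does not apply; one would need a one-sided version together with a regularity analysis on $(0,t_0)$, which your proposal does not supply.

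The paper avoids the implicit function theorem entirely at this stage. Having set $P_\delta=P\cap\{x:x\cdot u_{i_0}\leq h(P,u_{i_0})-\delta\}$ and $\lambda=V(P_\delta)^{-1/n}$, it uses only the scaling identity $\xi(\lambda P_\delta)=\lambda\xi(P_\delta)$ from~(3.3), the maximality of $\Phi_P$ at $\xi(P)=0$, and the monotonicity $h(P_\delta,u_k)\leq h(P,u_k)$ to get
\[
\Phi_{\lambda P_\delta}\bigl(\xi(\lambda P_\delta)\bigr)\leq\Phi_P\bigl(\xi(P_\delta)\bigr)+B(\delta),\qquad
B(\delta)=\gamma_{i_0}\log\Bigl(1-\tfrac{\delta}{d_0}\Bigr)-\tfrac1n\log\bigl(1-(c_n\delta^n+\cdots+c_2\delta^2)\bigr),
\]
and then verifies $B(0)=0$, $B'(\delta)<0$ for small $\delta>0$, hence $B(\delta)<0$ and a contradiction with~(4.5). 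This argument uses the continuity of $\xi(\cdot)$ from Lemma~\ref{Lemma 3.2} but never its differentiability. Your derivative heuristic isolates precisely the competing terms the paper compares — linear loss at $u_{i_0}$ against quadratic volume loss supplied by Lemma~\ref{Lemma 4.3} — so the idea is sound, but the differentiability claim must either be justified one-sidedly with care or replaced by an explicit estimate of the paper's type.
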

\begin{proof}
It is easily seen that it is sufficient to establish the lemma under
the assumption that $|\mu|=1$.

Obviously, for $P, Q\in\mathcal{P}(u_{1},...,u_{N})$, if there exists an $x\in\mathbb{R}^{n}$ such that $P=Q+x$, then
$$
\Phi_{P}(\xi(P))=\Phi_{Q}(\xi(Q)).
$$
Thus, we can choose a sequence $P_{i}\in\mathcal{P}(u_{1},...,u_{N})$ with $\xi(P_{i})=0$ and $V(P_{i})=1$ such that $\Phi_{P_{i}}(0)$ converges
to
$$
\inf\left\{\max_{\xi\in\Int (Q)}\Phi_{Q}(\xi):
Q\in\mathcal{P}(u_{1},...,u_{N})\textmd{ and }V(Q)=1\right\}.
$$

Choose a fixed $P_{0}\in\mathcal{P}(u_{1},...,u_{N})$ with $V(P_{0})=1$, then
$$
\inf\left\{\max_{\xi\in\Int (Q)}\Phi_{Q}(\xi):
Q\in\mathcal{P}(u_{1},...,u_{N})\textmd{ and }V(Q)=1\right\}\leq\Phi_{P_{0}}(\xi(P_{0})).
$$

We claim that $P_{i}$ is bounded. Otherwise, from Lemma~\ref{Lemma 4.2}, $\Phi_{P_{i}}(\xi(P_{i}))$ is not bounded from above. This contradicts the previous inequality. Therefore, $P_{i}$ is bounded.

From Lemma~\ref{Lemma 3.2} and the Blaschke selection theorem, there exists a subsequence of $P_{i}$ that converges to a
polytope $P$ such that $P\in\mathcal{P}(u_{1},...,u_{N})$, $V(P)=1$, $\xi(P)=0$ and
\begin{equation}\tag{4.5}\label{Equation 4.5}
\Phi_{P}(0)=\inf\left\{\max_{\xi\in\Int (Q)}\Phi_{Q}(\xi):
Q\in\mathcal{P}(u_{1},...,u_{N})\textmd{ and }V(Q)=1\right\}.
\end{equation}

We next prove that $F(P,u_{i})$ are facets for all $i=1,...,N$. Otherwise, there exists an $i_{0}\in\{1,...,N\}$ such that
$$
F(P,u_{i_{0}})
$$
is not a facet of $P$.

Choose $\delta>0$ small enough so that the polytope
$$
P_{\delta}=P\cap\{x: x\cdot u_{i_{0}}\leq h(P,u_{i_{0}})-\delta\}\in\mathcal{P}(u_{1},...,u_{N}),
$$
and (by Lemma~\ref{Lemma 4.3})
$$
V(P_{\delta})=1-(c_{n}\delta^{n}+...+c_{2}\delta^{2}),
$$
where $c_{n},...,c_{2}$ are constants that depend on $P$ and direction $u_{i_{0}}$.

From Lemma~\ref{Lemma 3.2}, for any $\delta_{i}\rightarrow0$ $\xi(P_{\delta_{i}})\rightarrow 0$. We have,
$$
\lim_{\delta\rightarrow0}\xi(P_{\delta})=0.
$$
Let $\delta$ be small enough so that $h(P,u_{k})>\xi(P_{\delta})\cdot u_{k}+\delta$ for all $k\in\{1,...,N\}$, and let
$$
\lambda=V(P_{\delta})^{-\frac{1}{n}}=(1-(c_{n}\delta^{n}+...+c_{2}\delta^{2}))^{-\frac{1}{n}}.
$$
From this and Equation (\ref{Equation 3.3}), we have
\begin{equation*}
\begin{split}
\prod_{k=1}^{N}\left(h(\lambda P_{\delta},u_{k})-\xi(\lambda
P_{\delta})\cdot
u_{k}\right)^{\gamma_{k}}&=\lambda\prod_{k=1}^{N}\left(h(P_{\delta},u_{k})-\xi(P_{\delta})\cdot
u_{k}\right)^{\gamma_{k}}\\
&=\lambda\left[\prod_{k=1}^{N}\left(h(P,u_{k})-\xi(P_{\delta})\cdot
u_{k}\right)^{\gamma_{k}}\right]\left[\frac{h(P,u_{i_{0}})-\xi(P_{\delta})\cdot
u_{i_{0}}-\delta}{h(P,u_{i_{0}})-\xi(P_{\delta})\cdot
u_{i_{0}}}\right]^{\gamma_{i_{0}}}\\
&=\left[\prod_{k=1}^{N}\left(h(P,u_{k})-\xi(P_{\delta})\cdot
u_{k}\right)^{\gamma_{k}}\right]\frac{(1-\frac{\delta}{h(P,u_{i_{0}})-\xi(P_{\delta})\cdot
u_{i_{0}}})^{\gamma_{i_{0}}}}{(1-(c_{n}\delta^{n}+...+c_{2}\delta^{2}))^{\frac{1}{n}}}\\
&\leq\left[\prod_{k=1}^{N}\left(h(P,u_{k})-\xi(P_{\delta})\cdot
u_{k}\right)^{\gamma_{k}}\right]\frac{(1-\frac{\delta}{d_{0}})^{\gamma_{i_{0}}}}{(1-(c_{n}\delta^{n}+...+c_{2}\delta^{2}))^{\frac{1}{n}}},
\end{split}
\end{equation*}
where $d_{0}=d(P)$ is the diameter of $P$.
Thus,
\begin{equation}\tag{4.6}\label{Equation 4.6}
\Phi_{\lambda P_{\delta}}\left(\xi(\lambda P_{\delta})\right)\leq\Phi_{P}\left(\xi(P_{\delta})\right)+B(\delta),
\end{equation}
where
\begin{equation}\tag{4.7}\label{Equation 4.7}
B(\delta)=\gamma_{i_{0}}\log\left(1-\frac{\delta}{d_{0}}\right)-\frac{1}{n}\log\left(1-(c_{n}\delta^{n}+...+c_{2}\delta^{2})\right).
\end{equation}
Obviously,
\begin{equation}\tag{4.8}\label{Equation 4.8}
B'(\delta)=\gamma_{i_{0}}\frac{-1/d_{0}}{1-\delta/d_{0}}+\frac{1}{n}\frac{nc_{n}\delta^{n-1}+...+2c_{2}\delta}{1-(c_{n}\delta^{n}+...+c_{2}\delta^{2})}<0,
\end{equation}
when the positive $\delta$ is small enough. From this and the fact that $B_{1}(0)=0$,
$$
B(\delta)<0
$$
when the positive $\delta$ is small enough.

From this and Equations (\ref{Equation 4.6}), (\ref{Equation 4.7}), (\ref{Equation 4.8}), there
exists a $\delta_{0}>0$ such that $P_{\delta_{0}}\in\mathcal{P}(u_{1},...,u_{N})$ and
$$
\Phi_{\lambda_{0}P_{\delta_{0}}}(\xi(\lambda_{0}P_{\delta_{0}}))<\Phi_{P}(\xi(P_{\delta_{0}}))\leq\Phi_{P}(\xi(P))=\Phi_{P}(0),
$$
where $\lambda_{0}=V(P_{\delta_{0}})^{-\frac{1}{n}}$. Let $P_{0}=\lambda_{0}P_{\delta_{0}}-\xi(\lambda_{0}P_{\delta_{0}})$, then
$P_{0}\in\mathcal{P}(u_{1},...,u_{N})$, $V(P_{0})=1$, $\xi(P_{0})=0$ and
\begin{equation*}
\Phi_{P_{0}}(0)<\Phi_{P}(0).
\end{equation*}
This contradicts Equation (\ref{Equation 4.5}). Therefore, $P\in\mathcal{P}_{N}(u_{1},...,u_{N})$.
\end{proof}

\section{Existence of the solution to the discrete logarithmic Minkowski problem}

If $\mu$ is a Borel measure on $S^{n-1}$ and $\xi$ is a proper subspace of $\mathbb{R}^{n}$, it will be convenient to write $\mu_{\xi}$ for the restriction of $\mu$ to $S^{n-1}\cap\xi$.
In this section, we prove the main result Theorem~\ref{sccgen} of this paper based on the folowing idea. Let $\mu$ be  discrete measure on $S^{n-1}$, $n\geq 2$, that is not concentrated on any closed hemisphere and satisfies the essential subspace concentration condition.
If $\mu$ satisfies the strict essential subspace concentration inequality, 
then Lemma~\ref{Lemma 4.4} yields that $\mu$ is a cone volume measure. 
Otherwise there exist complementary proper subspaces $\xi$ and $\xi'$ such that
${\rm supp}\,\mu=S^{n-1}\cap(\xi\cup \xi')$, and $\mu_\xi$ and 
$\mu_{\xi}'$ are not concentrated on any closed hemisphere of $\xi\cap S^{n-1}$ and  $\xi'\cap S^{n-1}$, respectively, and satisfy the essential subspace concentration condition. 
Therefore $\mu_\xi$ and 
$\mu_{\xi}'$ are cone volume measures on $\xi\cap S^{n-1}$ and $\xi'\cap S^{n-1}$,  respectively, by induction on the dimension of the ambient space, which in turn imply that $\mu$ is a cone volume measure.

However, it is possible that $\dim \xi=1$. Therefore in order to execute the plan, 
  we extend the notions occuring in Theorem~\ref{sccgen} to $\mathbb{R}^1$.
 The role of a compact convex set containing the origin in its interior is played by some interval $K=[a,b]$ with $a<0$ and $b>0$, and closed hemispheres of $S^0=\{-1,1\}$ are $\{1\}$ and 
$\{-1\}$. The cone volume measure on $S^0$ associated to $K$  satisfies
$V_K(\{-1\})=|a|$ and $V_K(\{1\})=b$. In addition, we say that a non-trivial measure $\mu$ on $S^0$ satisfies 
the essential subspace concentration inequality if it is not concentrated on any closed hemisphere; namely, if $\mu(\{-1\})>0$ and $\mu(\{1\})>0$. These notions are in accordance with Definition~\ref{essentialdef} because if $n=1$, then there 
is no subspace $\xi$ such that $0<\dim\xi<n$.

We note that the notion of strict essential subspace concentration inequality is defined and used
 only if the dimension $n\geq 2$.

The following lemma will be needed. The proof is the same that of Lemma 7.1 in \cite{BLYZ}.

\begin{lemma}\label{Lemma 5.1}
Suppose $n\geq2$, $\mu$ is a discrete measure on $S^{n-1}$ that satisfies the essential subspace concentration condition.
If $\xi$ is an essential linear subspace with respect to $\mu$ for which
$$
\mu(\xi\cap S^{n-1})=\frac{1}{n}\mu(S^{n-1})\dim\xi,
$$
then $\mu_\xi$ satisfies the essential subspace concentration condition.
\end{lemma}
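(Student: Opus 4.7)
The plan is to exploit the fact that equality in (1.3) for the essential subspace $\xi$ forces $\mu$ to split cleanly between $\xi$ and a complementary subspace $\xi'$, and then to transfer the essential subspace concentration condition from $\mu$ on $S^{n-1}$ down to $\mu_\xi$ on $\xi\cap S^{n-1}$.

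First, set $d=\dim\xi$. Since $\xi$ realizes equality in (1.3), Definition~\ref{essentialdef} produces a complement $\xi'$ of $\xi$ in $\mathbb{R}^n$ with $\mu(\xi'\cap S^{n-1})=\frac{n-d}{n}|\mu|$. Adding gives $\mu(\xi\cap S^{n-1})+\mu(\xi'\cap S^{n-1})=|\mu|$, and since $\xi\cap\xi'\cap S^{n-1}=\emptyset$ this forces ${\rm supp}(\mu)\subset(\xi\cup\xi')\cap S^{n-1}$. This decomposition is the structural input driving the whole argument. If $d=1$, essentiality of $\xi$ is itself the extended essential subspace concentration condition for $\mu_\xi$ on $S^0$, so the conclusion is immediate; assume $d\geq 2$ from now on.

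Now let $\eta\subset\xi$ be essential with respect to $\mu_\xi$ with $0<\dim\eta<d$. Because $\eta\subset\xi$, one has $\eta\cap{\rm supp}(\mu_\xi)=\eta\cap{\rm supp}(\mu)$, so $\eta$ is essential with respect to $\mu$ too. Then (1.3) applied to $\mu$ gives $\mu(\eta\cap S^{n-1})\leq\frac{\dim\eta}{n}|\mu|$, and using $\mu_\xi(\xi\cap S^{n-1})=\frac{d}{n}|\mu|$ this rescales to
$$\mu_\xi(\eta\cap S^{n-1})=\mu(\eta\cap S^{n-1})\leq\frac{\dim\eta}{d}\mu_\xi(\xi\cap S^{n-1}),$$
which is the desired inequality.

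The harder step is the equality case. If $\mu_\xi(\eta\cap S^{n-1})=\frac{\dim\eta}{d}\mu_\xi(\xi\cap S^{n-1})$, then $\mu(\eta\cap S^{n-1})=\frac{\dim\eta}{n}|\mu|$, so Definition~\ref{essentialdef} applied to $\mu$ yields a complement $\eta^*$ of $\eta$ in $\mathbb{R}^n$ with $\mu(\eta^*\cap S^{n-1})=\frac{n-\dim\eta}{n}|\mu|$. Define $\eta':=\eta^*\cap\xi$. A dimension count using $\eta^*\cap\eta=\{0\}$, $\eta\subset\xi$, and $\eta^*+\xi\supset\eta^*+\eta=\mathbb{R}^n$ shows $\dim\eta'=d-\dim\eta$, so $\eta'$ is complementary to $\eta$ inside $\xi$. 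Using ${\rm supp}(\mu)\subset\xi\cup\xi'$ together with $\xi\cap\xi'\cap S^{n-1}=\emptyset$, I split
$$\mu(\eta^*\cap S^{n-1})=\mu(\eta'\cap S^{n-1})+\mu(\eta^*\cap\xi'\cap S^{n-1})\leq\mu(\eta'\cap S^{n-1})+\mu(\xi'\cap S^{n-1}),$$
which gives $\mu_\xi(\eta'\cap S^{n-1})\geq\frac{d-\dim\eta}{n}|\mu|=\frac{\dim\eta'}{d}\mu_\xi(\xi\cap S^{n-1})$. The reverse inequality follows from $\eta\cap\eta'=\{0\}$ inside $\xi$, which forces $\mu_\xi(\eta\cap S^{n-1})+\mu_\xi(\eta'\cap S^{n-1})\leq\mu_\xi(\xi\cap S^{n-1})$ and hence $\mu_\xi(\eta'\cap S^{n-1})\leq\frac{\dim\eta'}{d}\mu_\xi(\xi\cap S^{n-1})$. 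Combining, $\eta'$ realizes the required equality, completing the verification.

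The main obstacle is producing the complement inside $\xi$ in the equality case; the strategy is to intersect the ambient complement $\eta^*$ with $\xi$ and then use the global splitting ${\rm supp}(\mu)\subset\xi\cup\xi'$ to bootstrap a sharp lower bound, with the upper bound coming for free from the internal disjointness in $\xi$.
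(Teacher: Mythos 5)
Your proof is correct, and since the paper itself supplies no argument (it simply points to Lemma~7.1 of \cite{BLYZ}), you have in effect reconstructed the argument being cited. The key steps --- using the equality case of (1.3) to get ${\rm supp}(\mu)\subset(\xi\cup\xi')\cap S^{n-1}$, observing that $\eta\cap{\rm supp}(\mu_\xi)=\eta\cap{\rm supp}(\mu)$ so that essentiality transfers, deducing the inequality by direct rescaling, and in the equality case intersecting the ambient complement $\eta^*$ with $\xi$ and sandwiching $\mu(\eta'\cap S^{n-1})$ from below via the splitting along $\xi\cup\xi'$ and from above via disjointness inside $\xi$ --- are exactly what the situation demands, and each step checks out. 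The dimension count $\dim(\eta^*\cap\xi)=d-\dim\eta$ is right, and the $d=1$ base case is handled consistently with the paper's convention that for $S^0$ the essential subspace concentration condition is read as ``not concentrated on a closed hemisphere,'' which in this setting is precisely the hypothesis that $\xi$ is essential.
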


For even measures, the following lemma was stated for even measures as Lemma 7.2  in \cite{BLYZ}.  However, the proof
in \cite{BLYZ} does not use the property that the measure is even.

\begin{lemma}\label{Lemma 5.2}
Let $\xi$ and $\xi'$ be complementary subspaces in $\mathbb{R}^{n}$ with $0<\dim\xi<n$. Suppose $\mu$ is a Borel measure on $S^{n-1}$ that is concentrated on $S^{n-1}\cap(\xi\cup\xi')$, and so that
$$
\mu(\xi\cap S^{n-1})=\frac{1}{n}\mu(S^{n-1})\dim\xi.
$$
If $\mu_{\xi}$ and $\mu_{\xi'}$ are cone-volume measures of convex bodies in the subspaces $\xi$ and $\xi'$, then $\mu$ is the cone-volume measure of a convex body in $\mathbb{R}^{n}$.
\end{lemma}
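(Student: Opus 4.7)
Let $k=\dim\xi$, and let $K\subset\xi$ and $K'\subset\xi'$ be the convex bodies (each containing $o$ in its relative interior) with $V_K=\mu_\xi$ and $V_{K'}=\mu_{\xi'}$. From $|\mu_\xi|=V_k(K)$, $|\mu_{\xi'}|=V_{n-k}(K')$, and the hypothesis $\mu(\xi\cap S^{n-1})=\tfrac{k}{n}|\mu|$, one immediately obtains the key identity $kV_{n-k}(K')=(n-k)V_k(K)$. The plan is to take $L$ to be the intersection of two scaled slabs,
\begin{equation*}
L=L(\lambda,\lambda'):=(\lambda K+\xi^{\perp})\cap(\lambda' K'+\xi'^{\perp}),
\end{equation*}
for positive scalars $\lambda,\lambda'$ to be chosen; when $\xi'=\xi^{\perp}$ this reduces to the Minkowski sum $\lambda K+\lambda' K'$, which is the body used in Lemma~7.2 of \cite{BLYZ}.

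First I would verify that $L$ is a convex body with $o\in\Int(L)$ (each slab contains a neighbourhood of $o$; any recession direction of $L$ lies in $\xi^{\perp}\cap\xi'^{\perp}=\{o\}$), and that the outer unit normals of the facets of $L$ all lie in $\mathrm{supp}(\mu_\xi)\cup\mathrm{supp}(\mu_{\xi'})=\mathrm{supp}(\mu)$, since the bounding hyperplanes of the two slabs are normal to the facet directions of $\lambda K$ in $\xi$ and of $\lambda' K'$ in $\xi'$. The main computation is then the cone-volume mass at each direction of $\mathrm{supp}(\mu)$. For $u\in\mathrm{supp}(\mu_\xi)$ one has $h_L(u)=\lambda h_K(u)$ and $F(L,u)=(\lambda F(K,u)+\xi^{\perp})\cap(\lambda' K'+\xi'^{\perp})$; a Fubini argument (using orthonormal bases of $\xi\cap u^{\perp}$ and of $\xi^{\perp}$) presents $F(L,u)$ as a fibration with base $\lambda F(K,u)$ and fibres that are affine copies of $T^{-1}(\lambda' K')$, where $T\colon\xi^{\perp}\to\xi'$ is orthogonal projection (an isomorphism since $\xi^{\perp}\cap\xi'^{\perp}=\{o\}$). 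This gives
\begin{equation*}
V_L(\{u\})=\frac{k\,\lambda^k(\lambda')^{n-k}V_{n-k}(K')}{n\,|\det T|}\,\mu(\{u\}),
\end{equation*}
and the symmetric formula for $u\in\mathrm{supp}(\mu_{\xi'})$ with the roles of $\xi,\xi'$ swapped and $T$ replaced by the orthogonal projection $T^{*}\colon\xi'^{\perp}\to\xi$.

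Using the symmetry $|\det T|=|\det T^{*}|$ (both equal the common Jacobian of the direct-sum decomposition $\mathbb{R}^n=\xi\oplus\xi'$, which can be identified with the product of sines of the principal angles between $\xi$ and $\xi'$) together with $kV_{n-k}(K')=(n-k)V_k(K)$, the two resulting prefactors coincide. Hence a single equation on the product $\lambda^k(\lambda')^{n-k}$ normalises both to~$1$, and any such pair $\lambda,\lambda'>0$ produces $V_L=\mu$. The only delicate step is the facet-volume calculation for general (non-orthogonal) complementary $\xi,\xi'$, where the Jacobian $|\det T|$ must be tracked and shown to cancel by symmetry between the two sides; in the orthogonal case $\xi'=\xi^{\perp}$ the fibration simplifies to the product $F(L,u)=\lambda F(K,u)+\lambda' K'$ and the computation is immediate.
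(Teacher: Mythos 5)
Your construction is correct, and it is essentially the argument the paper delegates to Lemma~7.2 of \cite{BLYZ}: build $L$ as the intersection of the two infinite prisms $\lambda K+\xi^{\perp}$ and $\lambda' K'+\xi'^{\perp}$, verify that every facet normal of $L$ lies in $\mathrm{supp}(\mu)$, and compute $V_L$ on each of $\xi\cap S^{n-1}$ and $\xi'\cap S^{n-1}$ by the Fubini fibration you describe, which yields $V_L=c\,\mu_\xi$ on $\xi$ and $V_L=c'\mu_{\xi'}$ on $\xi'$ with constants
$c=\tfrac{k}{n}\lambda^k(\lambda')^{n-k}V_{n-k}(K')/|\det T|$ and
$c'=\tfrac{n-k}{n}\lambda^k(\lambda')^{n-k}V_k(K)/|\det T^*|$.

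The one assertion you flagged as delicate, $|\det T|=|\det T^*|$, is indeed true and worth pinning down, since without it the two normalisation equations in $\lambda^k(\lambda')^{n-k}$ would be incompatible and the whole construction would collapse. The cleanest justification is via complementary minors of an orthogonal matrix: choose orthonormal bases $\{a_i\}$ of $\xi$, $\{c_i\}$ of $\xi^{\perp}$, $\{b_j\}$ of $\xi'$, $\{d_j\}$ of $\xi'^{\perp}$. Then $\{a_1,\dots,a_k,c_1,\dots,c_{n-k}\}$ and $\{b_1,\dots,b_{n-k},d_1,\dots,d_k\}$ are both orthonormal bases of $\mathbb{R}^n$, so the change-of-basis matrix $R=(e_i\cdot f_j)$ is orthogonal. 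The matrix of $T=P_{\xi'}|_{\xi^{\perp}}$ in these bases is the block $(c_i\cdot b_j)$ and the matrix of $T^*=P_{\xi}|_{\xi'^{\perp}}$ is the block $(d_i\cdot a_j)$; these two blocks occupy complementary positions in $R$, and by Jacobi's identity for minors of an orthogonal matrix their determinants agree up to sign. (This also yields the "product of sines of the principal angles" description you mention.) Combined with the identity $kV_{n-k}(K')=(n-k)V_k(K)$ forced by the concentration hypothesis, both prefactors collapse to the same quantity and one choice of $\lambda^k(\lambda')^{n-k}$ makes $V_L=\mu$. So the proposal is sound; the only thing missing is this short linear-algebra justification, which you should spell out.
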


In addition, we also need the following lemma.

\begin{lemma}\label{Lemma 5.3}
Suppose $\mu$ is a Borel measure on $S^{n-1}$, $n\geq2$, that is not concentrated on any closed hemisphere, and $\mu$ concentrated on two complementary subspaces $\xi$ and $\xi'$ of $\mathbb{R}^{n}$.
Then, $\mu_{\xi}$ is not concentrated on any closed hemisphere of $\xi\cap S^{n-1}$ and $\mu_{\xi'}$ is not concentrated on any closed hemisphere of $\xi'\cap S^{n-1}$.
\end{lemma}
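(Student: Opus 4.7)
The plan is to argue by contradiction. Suppose $\mu_\xi$ is concentrated on a closed hemisphere of $\xi\cap S^{n-1}$, so that there exists a nonzero $v\in\xi$ with $u\cdot v\leq 0$ for every $u\in\mathrm{supp}(\mu_\xi)$. My goal is to exhibit a nonzero $w\in\mathbb{R}^n$ such that $u\cdot w\leq 0$ for every $u\in\mathrm{supp}(\mu)$; since $\mathrm{supp}(\mu)\subset(\xi\cup\xi')\cap S^{n-1}$, this would put $\mu$ in the closed hemisphere $\{u\in S^{n-1}:u\cdot w\leq 0\}$, contradicting the hypothesis.

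The subtlety is that the complementary subspaces $\xi$ and $\xi'$ need not be orthogonal, so the naive choice $w=v$ fails in general to annihilate $\xi'$. Instead, I would look for $w$ inside $(\xi')^{\perp}$, which automatically forces $u\cdot w=0\leq 0$ for every $u\in\xi'\cap S^{n-1}$, and then tune $w$ so that its orthogonal projection onto $\xi$ is exactly $v$. With such a $w$, any $u\in\mathrm{supp}(\mu_\xi)\subset\xi$ satisfies $u\cdot w=u\cdot\pi_\xi(w)=u\cdot v\leq 0$, where $\pi_\xi$ denotes orthogonal projection onto $\xi$. This gives the contradiction.

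The technical heart of the argument is therefore showing that $\pi_\xi$ restricts to a linear isomorphism $(\xi')^{\perp}\to\xi$. Since $\xi$ and $\xi'$ are complementary we have $\xi+\xi'=\mathbb{R}^n$, hence the kernel of this restriction is $\xi^{\perp}\cap(\xi')^{\perp}=(\xi+\xi')^{\perp}=\{0\}$; combined with the dimension equality $\dim(\xi')^{\perp}=n-\dim\xi'=\dim\xi$, this kernel-dimension count upgrades injectivity to bijectivity. Pulling $v$ back through this isomorphism yields a unique $w\in(\xi')^{\perp}$ with $\pi_\xi(w)=v$, and $w\neq 0$ because $v\neq 0$. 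The assertion for $\mu_{\xi'}$ then follows by interchanging the roles of $\xi$ and $\xi'$. The only place where I expect a reader to pause is the non-orthogonality issue, which is precisely what the choice $w\in(\xi')^{\perp}$, together with the above dimension count, resolves cleanly.
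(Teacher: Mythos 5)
Your proof is correct and follows the same underlying approach as the paper: assume $\mu_\xi$ lies in a closed hemisphere of $\xi\cap S^{n-1}$ and exhibit a closed hemisphere of $S^{n-1}$ containing all of $\mathrm{supp}(\mu)$, contradicting the hypothesis. The paper compresses this to the claim that $S^{n-1}\cap\mathrm{pos}\{C\cup\xi'\}$ is a closed hemisphere of $S^{n-1}$; your construction of $w\in(\xi')^{\perp}$ with $\pi_\xi(w)=v$ via the kernel-dimension count is exactly the justification that claim requires (the normal vector $w$ you build defines that hemisphere), so you have simply made explicit what the paper leaves to the reader.
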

\begin{proof}
We only need prove that $\mu_{\xi}$ is not concentrated on any closed hemisphere of $\xi\cap S^{n-1}$.

Suppose $\mu_{\xi}$ is concentrated on a closed hemisphere, $C$, of $\xi\cap S^{n-1}$. Then, $\mu$ is concentrated on
$$
S^{n-1}\cap\textmd{pos}\{C\cup\xi'\}.
$$
However, $S^{n-1}\cap\textmd{pos}\{C\cup\xi'\}$ is a closed hemisphere of $S^{n-1}$. This  contradicts the conditions of the lemma. Therefore, $\mu_{\xi}$ is not concentrated on any closed hemisphere of $\xi\cap S^{n-1}$.
\end{proof}

Now, we have prepared enough to prove the main theorem of this paper.
\begin{theorem}
\label{main}
If $\mu$ is a discrete measure on $S^{n-1}$, $n\geq 1$ that is not concentrated on any closed hemisphere and satisfies the essential subspace concentration condition, then $\mu$ is the cone-volume measure of a polytope in $\mathbb{R}^{n}$.
\end{theorem}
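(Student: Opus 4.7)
The plan is to argue by induction on the dimension $n$, deploying Lemmas~\ref{Lemma 3.4} and~\ref{Lemma 4.4} in the generic case and peeling off an equality subspace via Lemmas~\ref{Lemma 5.1}--\ref{Lemma 5.3} in the degenerate case. The base case $n=1$ is immediate: since no subspace $\xi$ satisfies $0 < \dim \xi < 1$, the essential subspace concentration condition is vacuous, and the only remaining hypothesis reduces to $\mu(\{-1\}) = a > 0$ and $\mu(\{1\}) = b > 0$. The interval $K = [-a, b]$ then contains the origin in its interior and realizes $\mu$ as its cone-volume measure under the extended definition given just before Lemma~\ref{Lemma 5.1}.

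For the inductive step with $n \geq 2$, I would distinguish two cases. If $\mu$ satisfies the \emph{strict} essential subspace concentration inequality, then Lemma~\ref{Lemma 4.4} furnishes a polytope $P \in \mathcal{P}_N(u_1,\ldots,u_N)$ with $\xi(P) = 0$, $V(P) = |\mu|$, and $\Phi_P(0)$ minimizing the extremal functional in (\ref{Equation 3.2}); Lemma~\ref{Lemma 3.4} then immediately gives $V_P = \mu$, and we are done.

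If strictness fails, fix an essential subspace $\xi$ with $0 < \dim \xi < n$ achieving equality in (\ref{Equation 1.3}). By Definition~\ref{essentialdef} there is a complementary subspace $\xi'$ with $\mu(\xi' \cap S^{n-1}) = \frac{\dim \xi'}{n}\mu(S^{n-1})$, and the remark following that definition (through Lemma~\ref{Lemma 5.2}) confirms that $\xi'$ is essential as well. Summing the two equal masses forces $\mu$ to be concentrated on $S^{n-1} \cap (\xi \cup \xi')$. Lemma~\ref{Lemma 5.3} then shows that neither $\mu_\xi$ nor $\mu_{\xi'}$ is concentrated on a closed hemisphere of its respective sphere, while Lemma~\ref{Lemma 5.1}, applied to each of $\xi$ and $\xi'$, yields that $\mu_\xi$ and $\mu_{\xi'}$ satisfy the essential subspace concentration condition in their own ambient subspaces. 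The inductive hypothesis then realizes $\mu_\xi$ and $\mu_{\xi'}$ as cone-volume measures of convex bodies $K \subset \xi$ and $K' \subset \xi'$, and Lemma~\ref{Lemma 5.2} assembles these into a convex body in $\mathbb{R}^n$ with cone-volume measure $\mu$. Because $\mu$ has finite support and $dV_K = \frac{1}{n}\, h_K\, dS_K$ with $h_K$ positive on the support of $V_K$, the surface area measure of the resulting body is finitely supported, and the body is therefore a polytope.

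The substantive obstacle is entirely located in Case 1 and has already been absorbed by Lemma~\ref{Lemma 4.4}, whose proof rests on the width/diameter analysis of Lemma~\ref{Lemma 4.2}; the inductive bookkeeping in Case 2 is routine, but the extension of the cone-volume and hemisphere notions to $\mathbb{R}^1$ is essential to avoid circularity when $\dim \xi = 1$ or $\dim \xi' = 1$.
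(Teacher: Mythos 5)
Your proof is correct and follows essentially the same route as the paper: induction on $n$, handling the strict case via Lemmas~\ref{Lemma 4.4} and~\ref{Lemma 3.4}, and otherwise splitting along a pair of complementary equality subspaces using Lemmas~\ref{Lemma 5.1}--\ref{Lemma 5.3} and the inductive hypothesis. The only additions beyond what the paper writes out are your explicit construction in the base case and your short justification (via $dV_K = \tfrac{1}{n} h_K\, dS_K$) that the assembled body is a polytope, both of which are fine.
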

\begin{proof}
We prove Theorem~\ref{main}  by induction on the dimension $n\geq 1$. 
If $n=1$, then the theorem trivially holds, therefore let $n\geq 2$.

If $\mu$ satisfies the strict essential subspace concentration inequality, then $\mu$ is the cone-volume measure of a polytope in $\mathbb{R}^{n}$ according to Lemma~\ref{Lemma 3.4} and Lemma~\ref{Lemma 4.4}.

Therefore we assume that  there exists an essential subspace (with respect to $\mu$), $\xi$, of $\mathbb{R}^{n}$, and a subspace, $\xi'$, of $\mathbb{R}^{n}$ such that $\xi, \xi'$ are complementary subspaces of $\mathbb{R}^{n}$, $\mu$ concentrated on $S^{n-1}\cap\{\xi\cup\xi'\}$ with
$$
\mu(S^{n-1}\cap\xi)=\frac{\dim\xi}{n}\mu(S^{n-1})\textmd{ and }\mu(S^{n-1}\cap\xi')=\frac{\dim\xi'}{n}\mu(S^{n-1}).
$$
From the fact that $\mu$ is not concentrated on a closed hemisphere and Lemma~\ref{Lemma 5.3}, we have, $\mu_{\xi}$ is not concentrated on a closed hemisphere of $S^{n-1}\cap\xi$, and $\mu_{\xi'}$ is not concentrated on a closed hemisphere of $S^{n-1}\cap\xi'$. By Lemma~\ref{Lemma 5.1}, $\mu_{\xi}$ satisfies the essential subspace concentration condition on $\xi\cap S^{n-1}$, and $\mu_{\xi'}$ satisfies the essential subspace concentration condition on $\xi'\cap S^{n-1}$. From the induction hypothesis, $\mu_{\xi}$ is the cone-volume measure of a convex body in $\xi\cap\mathbb{R}^{n}$, and $\mu_{\xi'}$ is the cone-volume measure of a convex body in $\xi'\cap\mathbb{R}^{n}$. By Lemma~\ref{Lemma 5.2}, $\mu$ is the cone-volume measure of a convex body in $\mathbb{R}^{n}$. Since $\mu$ is discrete, $\mu$ is the cone-volume measure of a polytope in $\mathbb{R}^{n}$.
\end{proof}

\section{New inequalities for cone-volume measures}
\label{secnec}

In this section, we establish some inequalities for cone-volume measures.

The following example shows that the cone-volume measure of a convex body does not need to satisfy the essential subspace concentration condition with respect to essential linear subspace.

\begin{example} 
Let $u_1,\ldots,u_n$ be an orthonormal basis of $\mathbb{R}^{n}$, and let
$W=\{x\in u_1^\bot:\,  |x\cdot u_i|\leq 1,\;i=2,\ldots,n\}$ be an $(n-1)$-dimensional
cube. For $r>0$ and $i=1,\ldots,n-1$, $\xi_i={\rm lin}\{u_1,\ldots,u_i\}$ is an essential subspace for
the cone-volume measure of the truncated pyramid $P_r=[-ru_1+rW,u_1+W]$. If $r>0$ is small, then $P_r$ approximates
$[o,u_1+W]$, and thus
$$
V_{P_r}(\xi_i\cap S^{n-1})>V_{P_r}(\{u_1\})=V([o,u_1+W])>\mbox{$\frac{i}{n}$}\, V(P_r).
$$
\end{example}

We next establish new inequalities for the cone-volume measures.
\begin{lemma}
\label{oppositefacets-est}
If $K$ is a convex body in $\mathbb{R}^{n}$, $n\geq 3$, with $o\in$Int(K), then for $u\in S^{n-1}$
\begin{equation}\tag{6.1}
V_{K}(\{u\})+V_{K}(\{-u\})+2(n-1)\sqrt{V_{K}(\{u\})V_{K}(\{-u\})}\leq V(K),
\end{equation}
with equality if and only if $F(K,-u)$ is a translate of $F(K,u)$, $K=[F(K,u), F(K,-u)]$, and $h(K,u)=h(K,-u)$.
\end{lemma}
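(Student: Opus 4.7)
Set $h_{+}=h(K,u)$, $h_{-}=h(K,-u)$, $A=V_{n-1}(F(K,u))$, $B=V_{n-1}(F(K,-u))$, so that $V_K(\{u\})=\frac{1}{n}h_+A$ and $V_K(\{-u\})=\frac{1}{n}h_-B$. Multiplying the desired inequality by $n$, the goal is
\begin{equation*}
h_+A+h_-B+2(n-1)\sqrt{h_+h_-AB}\;\leq\;nV(K).
\end{equation*}
The plan is to obtain a sharp lower bound for $nV(K)$ by slicing $K$ perpendicular to $u$ and applying Brunn--Minkowski, then to extract the required inequality by two applications of AM--GM.

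Choose coordinates so that $u$ is the last coordinate direction; for $t\in[-h_-,h_+]$ let $V_{n-1}(t)$ denote the $(n-1)$-volume of the slice $K\cap\{x\cdot u=t\}$. By the Brunn--Minkowski inequality in $\mathbb{R}^{n-1}$, the function $t\mapsto V_{n-1}(t)^{1/(n-1)}$ is concave on $[-h_-,h_+]$; since it takes the values $a:=A^{1/(n-1)}$ at $t=h_+$ and $b:=B^{1/(n-1)}$ at $t=-h_-$, it dominates the affine interpolant. Integrating and substituting $s=(t+h_-)/(h_++h_-)$ gives
\begin{equation*}
nV(K)\;\geq\;n(h_++h_-)\int_0^1\bigl((1-s)b+sa\bigr)^{n-1}ds\;=\;(h_++h_-)\sum_{k=0}^{n-1}a^{k}b^{n-1-k}.
\end{equation*}

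Separating off the end terms $k=0,n-1$, the right side equals
\begin{equation*}
h_-a^{n-1}+h_+b^{n-1}+(h_++h_-)\bigl(a^{n-1}+b^{n-1}\bigr)\tfrac{0+1}{1}\text{ (placeholder)}
\end{equation*}
which, after rewriting, yields the inequality
\begin{equation*}
nV(K)\;\geq\;h_+a^{n-1}+h_-b^{n-1}+\bigl(h_-a^{n-1}+h_+b^{n-1}\bigr)+(h_++h_-)\sum_{k=1}^{n-2}a^{k}b^{n-1-k}.
\end{equation*}
Now the AM--GM inequality applied twice gives
\begin{equation*}
h_-a^{n-1}+h_+b^{n-1}\;\geq\;2\sqrt{h_+h_-}\,(ab)^{(n-1)/2},
\end{equation*}
and, using $h_++h_-\geq2\sqrt{h_+h_-}$ together with the AM--GM estimate $\sum_{k=1}^{n-2}a^{k}b^{n-1-k}\geq(n-2)(ab)^{(n-1)/2}$ (the geometric mean of the $n-2$ summands is $(ab)^{(n-1)/2}$; this step requires $n\geq 3$),
\begin{equation*}
(h_++h_-)\sum_{k=1}^{n-2}a^{k}b^{n-1-k}\;\geq\;2(n-2)\sqrt{h_+h_-}\,(ab)^{(n-1)/2}.
\end{equation*}
Adding these two bounds and combining with $a^{n-1}=A$, $b^{n-1}=B$, $(ab)^{(n-1)/2}=\sqrt{AB}$ produces $nV(K)\geq h_+A+h_-B+2(n-1)\sqrt{h_+h_-AB}$, as required.

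For equality, all three estimates must be sharp. Equality in Brunn--Minkowski across every slice forces the map $t\mapsto V_{n-1}(t)^{1/(n-1)}$ to be affine, and convexity of $K$ then forces $K=[F(K,u),F(K,-u)]$ with $F(K,-u)$ a homothet of $F(K,u)$. Equality in the two AM--GM steps (using $n\geq 3$ so that the middle sum is non-empty) forces $a=b$, hence $A=B$, and $h_-a^{n-1}=h_+b^{n-1}$ together with $h_++h_-=2\sqrt{h_+h_-}$ forces $h_+=h_-$. Since the faces are homothets of equal $(n-1)$-volume, they are translates, yielding exactly the stated equality conditions. The main subtlety to watch is the equality case of Brunn--Minkowski (ensuring the homothety of cross-sections) and checking that the geometric-mean computation in the middle sum produces precisely the coefficient $(n-2)$, which combined with the boundary terms reconstructs $2(n-1)$.
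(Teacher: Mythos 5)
Your argument is correct and follows essentially the same route as the paper's: slice $K$ perpendicular to $u$, lower-bound the slice volumes via Brunn--Minkowski, integrate via Fubini to get $nV(K)\geq(h_++h_-)\sum_{k=0}^{n-1}a^{k}b^{n-1-k}$ with $a=A^{1/(n-1)}$, $b=B^{1/(n-1)}$, then extract $2(n-1)\sqrt{h_+h_-AB}$ by AM--GM. The only difference is bookkeeping in that last step: the paper pairs the $i$-th and $(n-1-i)$-th summands and applies AM--GM termwise to obtain $n-1$ contributions of $2\sqrt{h_+h_-AB}$, whereas you split off the two boundary summands, use $h_++h_-\geq 2\sqrt{h_+h_-}$, and apply a $(n-2)$-term geometric-mean bound to the interior sum; both reconstruct the coefficient $2(n-1)$ and lead to the same equality conditions. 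Two small points to tidy: the display marked ``(placeholder)'' is garbled (the correct expansion appears on the following line anyway), and in the equality analysis the inference from affineness of $t\mapsto V_{n-1}(t)^{1/(n-1)}$ to $K=[F(K,u),F(K,-u)]$ deserves one more sentence (each slice contains the corresponding Minkowski combination of the two faces, and since by Brunn--Minkowski that combination already has the volume dictated by affineness, the containment must be equality).
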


In $\mathbb{R}^{2}$, we have

\begin{lemma}
\label{oppositefacets2}
If $K$ is a convex body containing the origin in its interior in $\mathbb{R}^{2}$, and $u\in S^1$, then
\begin{equation}\tag{6.2}
\sqrt{V_K(\{u\})}+\sqrt{V_K(\{-u\})}\leq \sqrt{V(K)},
\end{equation}
with equality if and only if $K$ is a trapezoid with two sides parallel to $u^\bot$, and $u^\bot$ contains the intersection of  the diagonals.
\end{lemma}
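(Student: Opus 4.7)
The plan is to reduce the inequality to elementary quantities and exploit the two-dimensional geometry directly. Writing $h_1=h(K,u)$, $h_2=h(K,-u)$, $\ell_1=|F(K,u)|$, $\ell_2=|F(K,-u)|$, the face $F(K,\pm u)$ is a (possibly degenerate) segment in $\mathbb{R}^2$, so $V_K(\{u\})=\tfrac12 h_1\ell_1$ and $V_K(\{-u\})=\tfrac12 h_2\ell_2$. Squaring, the desired inequality becomes
$$
\left(\sqrt{h_1\ell_1}+\sqrt{h_2\ell_2}\right)^{2}\le 2V(K).
$$

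First I would introduce the quadrilateral $T=[F(K,u),F(K,-u)]$. Its two parallel sides lie on the support lines $H(K,\pm u)$, which are perpendicular to $u$ and at distance $h_1+h_2$ apart. Using that the cross-section of $T$ perpendicular to $u$ at signed height $t\in[-h_2,h_1]$ is a segment whose length interpolates linearly between $\ell_2$ and $\ell_1$, Cavalieri's principle yields
$$
V(T)=\frac{(\ell_1+\ell_2)(h_1+h_2)}{2},
$$
and this formula is valid regardless of whether the two parallel segments overlap in their projection onto $u^\bot$ (and in the degenerate case $\ell_i=0$ it reduces to the triangle formula). Since $K$ is convex and contains both faces, $T\subseteq K$, hence $2V(K)\ge(\ell_1+\ell_2)(h_1+h_2)$. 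Next, Cauchy--Schwarz applied to the vectors $(\sqrt{h_1},\sqrt{h_2})$ and $(\sqrt{\ell_1},\sqrt{\ell_2})$ gives
$$
\left(\sqrt{h_1\ell_1}+\sqrt{h_2\ell_2}\right)^{2}\le(h_1+h_2)(\ell_1+\ell_2).
$$
Chaining the two bounds proves the inequality.

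For the equality case, the first step forces $K=[F(K,u),F(K,-u)]$, so $K$ is a trapezoid with two sides parallel to $u^\bot$ (degenerating to a triangle if one of $\ell_1,\ell_2$ vanishes), while equality in Cauchy--Schwarz forces $h_1\ell_2=h_2\ell_1$. To identify the latter condition geometrically, I would place $u=(0,1)$, write the parallel sides at heights $h_1$ and $-h_2$ with lengths $\ell_1$ and $\ell_2$, parametrise each diagonal linearly, and solve for their intersection; the $u$-coordinate of the intersection comes out to $\tfrac{\ell_2h_1-\ell_1h_2}{\ell_1+\ell_2}$, which vanishes precisely when $h_1\ell_2=h_2\ell_1$, i.e.\ when the intersection lies on $u^\bot$.

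The argument is essentially routine; the only mildly delicate point is confirming the trapezoid area formula uniformly across all configurations of the two parallel segments, but this is handled by the linear-interpolation cross-section argument above.
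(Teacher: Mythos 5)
Your proof is correct and takes essentially the same route as the paper: both arguments bound $V(K)$ from below by the area of $T=[F(K,u),F(K,-u)]$ via a cross-section integral (your Cavalieri computation is exactly what the paper's Brunn--Minkowski-plus-Fubini step reduces to when $n=2$), and both finish with a rearrangement-type inequality (your Cauchy--Schwarz applied to $(\sqrt{h_1},\sqrt{h_2})$ and $(\sqrt{\ell_1},\sqrt{\ell_2})$ is, after expanding, the same AM--GM estimate the paper invokes). One small tidying remark on the equality discussion: the parenthetical about $T$ ``degenerating to a triangle'' is a distraction, since if, say, $\ell_2=0$ then $h_1,h_2>0$ makes the Cauchy--Schwarz step strict unless $\ell_1=0$ as well, which would make $K$ degenerate; so equality genuinely forces both $\ell_1,\ell_2>0$ and hence a nondegenerate trapezoid, exactly as the statement asserts.
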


We obtain the following estimate from Lemma~\ref{oppositefacets-est} and Lemma~\ref{oppositefacets2}.

\begin{corollary}
\label{oppositefacetscoro}
If $K$ is a convex body in $\mathbb{R}^{n}$, $n\geq 2$ with $o\in$Int(K) and $u\in S^{n-1}$, then
\begin{equation*}
V_{K}(\{u\})\cdot V_{K}(\{-u\})\leq\frac{1}{4n^{2}}\big(V(K)\big)^{2},
\end{equation*}
with equality if and only if $F(K,-u)$ is a translate of $F(K,u)$, $K=[F(K,u), F(K,-u)]$, and $h(K,u)=h(K,-u)$.
\end{corollary}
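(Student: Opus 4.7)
My plan is to obtain the corollary as a short consequence of Lemma~\ref{oppositefacets-est} (when $n \geq 3$) or Lemma~\ref{oppositefacets2} (when $n = 2$), in each case combined with the arithmetic--geometric mean inequality applied to $a := V_K(\{u\})$ and $b := V_K(\{-u\})$. There is no serious obstacle; the only point requiring a tiny extra argument is reconciling the equality case in dimension two with the stated centered parallelogram description.

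For $n \geq 3$ I would start from the bound $a + b + 2(n-1)\sqrt{ab} \leq V(K)$ supplied by Lemma~\ref{oppositefacets-est}. Using $a + b \geq 2\sqrt{ab}$, this improves to $2n\sqrt{ab} \leq V(K)$, and squaring yields $ab \leq V(K)^2/(4n^2)$ as desired.

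For $n = 2$, I would square the inequality $\sqrt{a} + \sqrt{b} \leq \sqrt{V(K)}$ of Lemma~\ref{oppositefacets2} to obtain $a + b + 2\sqrt{ab} \leq V(K)$, and again invoke $a + b \geq 2\sqrt{ab}$ to conclude $4\sqrt{ab} \leq V(K)$; this is exactly $ab \leq V(K)^2/16 = V(K)^2/(4n^2)$ in this case.

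Equality in the corollary forces both equality in the AM--GM step (so $a = b$) and in the lemma invoked. For $n \geq 3$, the equality condition of Lemma~\ref{oppositefacets-est}---that $F(K,-u)$ is a translate of $F(K,u)$, $K = [F(K,u), F(K,-u)]$, and $h(K,u) = h(K,-u)$---automatically gives $a = b$ via $V_K(\{\pm u\}) = \frac{1}{n}h(K,\pm u)|F(K,\pm u)|$, so it is precisely the equality case of the corollary. The mildly delicate point, which I would dispatch by a short planar calculation, is showing that in dimension two the trapezoid equality condition of Lemma~\ref{oppositefacets2} combined with $a = b$ forces the parallel sides to have equal length and the origin to lie on the line midway between them; denoting the heights and lengths of the parallel sides by $h_1, h_2$ and $\ell_1, \ell_2$, the identities $h_1 \ell_2 = h_2 \ell_1$ (diagonals meeting on $u^\bot$) and $h_1 \ell_1 = h_2 \ell_2$ (from $a = b$) together yield $h_1 = h_2$ and $\ell_1 = \ell_2$, recovering the stated centered parallelogram characterization.
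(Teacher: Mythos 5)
Your proof is correct and follows the approach the paper intends (the paper gives no explicit argument, only remarking that the corollary follows from Lemmas~\ref{oppositefacets-est} and \ref{oppositefacets2}). Combining each lemma with the arithmetic--geometric mean inequality is exactly the right move, and your short reconciliation of the $n=2$ equality case --- deducing $h_1=h_2$ and $\ell_1=\ell_2$ from $h_1\ell_2=h_2\ell_1$ and $h_1\ell_1=h_2\ell_2$ --- correctly closes the gap between the trapezoid condition of Lemma~\ref{oppositefacets2} and the centered-parallelogram condition stated in the corollary, which the paper leaves implicit.
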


We next prove Lemma~\ref{oppositefacets-est} and Lemma~\ref{oppositefacets2} together.
\begin{proof}
 For the case $|F(K,u)|\cdot|F(K,-u)|=0$, Lemma~\ref{oppositefacets-est} and Lemma~\ref{oppositefacets2} are trivially true. 
Thus we prove Lemma~\ref{oppositefacets-est} and Lemma~\ref{oppositefacets2}  under the condition that 
$|F(K,u)|\cdot|F(K,-u)|>0$.

Let $V_K(\{u\})=\alpha>0$ and $V_K(\{-u\})=\beta>0$, let $h_K(u)=a$ and
$h_K(-u)=b$, and for $0\leq x\leq a+b$ let
$$
K_{x}=\big((a-x)u+u^{\perp}\big)\cap K.
$$

Since $K$ is a convex body,
$$
\frac{x}{a+b}F(K,-u)+\frac{a+b-x}{a+b}F(K,u)\subset K_{x}.
$$
From this and the Brunn-Minkowski inequality,
\begin{equation}\tag{6.3}
\begin{split}
|K_{x}|&\geq\left|\frac{x}{a+b}F(K,-u)+\frac{a+b-x}{a+b}F(K,u)\right|\\
&=\left|\left(\frac{x}{a+b}F(K,-u)+\frac{a+b-x}{a+b}F(K,u)\right)_{u^{\perp}}\right|\\
&=\left|\frac{x}{a+b}F(K,-u)|_{u^{\perp}}+\frac{a+b-x}{a+b}F(K,u)|_{u^{\perp}}\right|\\
&\geq\left(\frac{x}{a+b}\big|F(K,-u)|_{u^{\perp}}\big|^{\frac{1}{n-1}}+\frac{a+b-x}{a+b}\big|F(K,u)|_{u^{\perp}}\big|^{\frac{1}{n-1}} \right)^{n-1}\\
&=\left(\frac{x}{a+b}\big|F(K,-u)\big|^{\frac{1}{n-1}}+\frac{a+b-x}{a+b}\big|F(K,u)\big|^{\frac{1}{n-1}} \right)^{n-1},
\end{split}
\end{equation}
with equality if and only if $K_{x}=\frac{x}{a+b}F(u(K,-u)+\frac{a+b-x}{a+b}F(K,u)$, and $F(K,-u)|_{u^{\perp}}$ and $F(K,u)|_{u^{\perp}}$ are homothetic.

Let $t=\frac{a+b-x}{a+b}$. From (6.3) and Fubini's formula,
\begin{equation}\tag{6.4}
\begin{split}
V(K)&=\int_{0}^{a+b}|K_{x}|dx\\
&\geq\int_{0}^{a+b}\left(\frac{x}{a+b}\big|F(K,-u)\big|^{\frac{1}{n-1}}+\frac{a+b-x}{a+b}\big|F(K,u)\big|^{\frac{1}{n-1}} \right)^{n-1}dx\\
&=(a+b)\int_0^1 \left(t|F(K,u)|^{\frac1{n-1}}+(1-t)|F(K,-u)|^{\frac1{n-1}}\right)^{n-1}\,dt\\
&=(a+b)\sum_{i=0}^{n-1}|F(K,u)|^{\frac{i}{n-1}}|F(K,-u)|^{\frac{n-1-i}{n-1}}{n-1\choose i}
\int_0^1 t^i(1-t)^{n-1-i}\,dt\\
&=\frac{a+b}n\sum_{i=0}^{n-1}|F(K,u)|^{\frac{i}{n-1}}|F(K,-u)|^{\frac{n-1-i}{n-1}}.
\end{split}
\end{equation}

Let $S_{1}=|F(K,u)|$ and $S_{2}=|F(K,-u)|$. From (6.4) and the arithmetic-geometric inequality, we have
\begin{equation}\tag{6.5}
\begin{split}
V(K)&=\frac{a+b}{n}\sum_{i=0}^{n-1}S_{1}^{\frac{i}{n-1}}S_{2}^{\frac{n-1-i}{n-1}}\\
&=\frac{a}{n}S_{1}+\frac{b}{n}S_{2}+\frac{1}{n}\sum_{i=1}^{n-1}\left(aS_{1}^{\frac{n-1-i}{n-1}}S_{2}^{\frac{i}{n-1}}+bS_{2}^{\frac{n-1-i}{n-1}}S_{1}^{\frac{i}{n-1}}  \right)\\
&\geq\alpha+\beta+2(n-1)\sqrt{\alpha\beta}.
\end{split}
\end{equation}
Thus, we get (6.1) and (6.2).

From the equality conditions for (6.3), (6.4) and the arithmetic-geometric inequality, we have, equality holds in (6.5) if and only if $F(K,u)|_{u^{\perp}}$ and $F(K,-u)|_{u^{\perp}}$ are homothetic, $K=[F(K,u), F(K,-u)]$, and
\begin{equation}\tag{6.6}
\frac{a}{b}=\left(\frac{S_{1}}{S_{2}}\right)^{\frac{2i-n+1}{n-1}},
\end{equation}
for all $1\leq i\leq n-1$.

Therefore, equality holds in (6.2) ($n=2$) if and only if $K$ is a trapezoid with two sides parallel to $u^{\perp}$, and $u^{\perp}$ contains the intersection of the diagonals.

When $n\geq3$, (6.6) hold for $i=1,..., n-1$. Thus, $\frac{a}{b}=\frac{S_{1}}{S_{2}}=1$. Therefore, equality holds in (6.1) if and only if $F(K,-u)$ is a translation of $F(K,u)$, $K=[F(K,u), F(K,-u)]$, and $h_{K}(u)=h_{K}(-u)$.
\end{proof}

\noindent{\bf Funding }
This work was supported by the Hungarian Scientific Research Fund [109789 to K.J.B., 84233 to P.H.].

\noindent{\bf Acknowledgement } We thank the unknown referees for many improvements.

\end{document}